\providecommand{\U}[1]{\protect\rule{.1in}{.1in}}
\numberwithin{equation}{section}
\newtheorem{theorem}{Theorem}[section]
\newtheorem{lemma}[theorem]{Lemma}
\newtheorem{proposition}[theorem]{Proposition}
\newtheorem{remark}[theorem]{Remark}
\newtheorem{definition}[theorem]{Definition}
\newtheorem{assumption}[theorem]{Assumption}
\def\<{\langle}
\def\>{\rangle}
\def\d{\,{\rm d}}
\def\&{\,&}
\def\div{{\rm div}}
\def\supp{{\rm supp}\,}
\def\N{\mathbb{N}}
\def\R{\mathbb{R}}
\def\B{\mathcal{B}}
\def\p{\partial}
\def\eps{\varepsilon}
\def\vphi{\varphi}
\def\I{\mathcal{I}}
\def\H{\mathcal{H}}
\begin{document}

\title{From Fisher information decay for the Kac model to the Landau-Coulomb hierarchy}

\author{José Antonio Carrillo\footnote{Email: carrillo@maths.ox.ac.uk. Mathematical Institute, University of Oxford, Oxford, OX2 6GG, UK.} \,   Shuchen Guo\footnote{Email: guo@maths.ox.ac.uk. Mathematical Institute, University of Oxford, Oxford, OX2 6GG, UK.} }

\maketitle

\vspace{-20pt}

\begin{abstract}
We consider the Kac model for the space-homogeneous Landau equation with the Coulomb potential. We show that the Fisher information of the Liouville equation for the unmodified $N$-particle system is monotonically decreasing in time. The monotonicity ensures the compactness to derive a weak solution of the Landau hierarchy. 
\end{abstract}

\textbf{Keywords:} Space-Homogeneous Landau Equation, Fisher Information, BBGKY Hierarchy, Mean-Field Limits.


\section{Introduction}\label{introduction}

Landau in his seminal work \cite{landau1936kinetische} in the 1930s derived a nonlinear nonlocal diffusion equation, now bearing his name, starting from the Boltzmann equation by the so-called \textit{grazing collision limit}. While this equation is fundamental in understanding weak collision phenomena in plasma physics, its mathematical derivation and the properties of its solutions have not been elucidated even in the spatial homogeneous case till very recently.
Here, we study the derivation of the 3D space-homogeneous Landau equation with the Coulomb potential, Landau-Coulomb equation, that reads as follows
\begin{equation}\label{eq Landau}
\p_t f(v^1)=\div_{v^1} \left[ \int_{\R^3} A(v^1-v^2)\left( \nabla_{v^1}f(v^1)f(v^2)-f(v^1)\nabla_{v^2}f(v^2)\right) \d v^2\right],\quad f(0,v^1)=f^0(v^1),
\end{equation}
where the matrix-valued function $A(z)$ is the product of the the potential $a(z)$ and the projection matrix $\Pi(z)$ defined as follows
$$
A(z):=a(z)\Pi(z),\quad\text{with}\quad a(z):=\frac{1}{\left|z\right|}\quad\text{and}\quad \Pi(z):=\operatorname{Id}-\frac{z \otimes z}{|z|^2}.
$$
The Landau equation can be also reformulated into the form
$$
\p_tf(v^1) =\div_{v^1} \left[\big(A\ast f\big)(v^1)\nabla_{v^1} f(v^1)- \big(B\ast f\big)(v^1)\,f(v^1)\right],
$$
with the vector $B$ is further defined as
$$
B(z)=\div A(z)=-\frac{2z}{|z|^3}.
$$

The solution $f(t,v^1)$ represents the density of particles in a plasma with velocity $v^1\in \R^3$ at time $t$. By definition, the one-particle marginal satisfies $f_1(t,v^1)=f(t,v^1)$; if we assume statistical independence, then the two-particle marginal factorises as  $f_2(t,v^1,v^2)=f(t,v^1)f(t,v^2)$. Under this assumption, $f_1$ satisfies the equation (hierarchy)
$$
\p_t f_{1}(v^1)=\div_{v^1}\left[\int_{\R^3}A(v^1-v^{2})\left(\nabla_{v^1}f_{2}(v^1,v^2)-\nabla_{v^2}f_{2}(v^1,v^2)\right)\d v^2\right].
$$
More generally, for any $m\in\N$, the equation governing the evolution of the $m$-particle marginals is given by the Landau hierarchy on $\R^{3m}$,
\begin{equation}\label{eq Landau hierarchy}
\p_t f_{m}=\sum_{i=1}^m\div_{v^i}\left[\int_{\R^3}A(v^i-v^{m+1})\left(\nabla_{v^1}f_{m+1}-\nabla_{v^{m+1}}f_{m+1}\right)\d v^{m+1}\right],
\end{equation}
with initial data $f_m(0)=(f^0)^{\otimes m}$. An observation is that if the one-particle distribution  $f$  satisfies the Landau equation \eqref{eq Landau}, then the tensorised solution  $f_m=(f)^{\otimes m}$  solves the hierarchy \eqref{eq Landau hierarchy}. 

However, even if we assume that the initial data $f_m(0)$ factorises, the tensorisation property is immediately destroyed due the the interaction between particles. To better understand this property, Kac proposed a framework of \textit{propagation of chaos} in the seminal work \cite{kac1956foundations}, which states that if particles are initially independent, they remain asymptotically independent under the evolution. Proving  propagation of chaos for the Landau-Coulomb equation \eqref{eq Landau}, however, is a longstanding challenge. To answer this problem in the \textit{mean-field} scaling regime, we would like to divide it into two key steps. The first one is to derive the Landau-Coulomb hierarchy \eqref{eq Landau hierarchy} from an underlying particle system; the second step is to show that the hierarchy allows only the tensorised solution, namely, to prove the uniqueness of the Landau-Coulomb hierarchy \eqref{eq Landau hierarchy}. We also refer to \cite{bresch2022new} by Bresch-Jabin-Soler for a similar strategy used for the Vlasov-Fokker-Planck equation. In this work, we provide a rigorous proof of the first step for the Landau-Coulomb hierarchy \eqref{eq Landau hierarchy}. 

In \cite{kac1956foundations}, Kac considered the $N$-particle model given by Markov processes, and proved the large $N$ limit towards the space-homogeneous Boltzmann equation in a simpler case. To be more precise, the evolution of the exchangeable particle system is governed by the Liouville equation $\p_t f_N=\mathcal{L}_N f_N$, and the first marginal of $f_N$ converges to the solution of the Boltzmann equation. We refer to the work by Mischler-Mouhot \cite{mischler2013kac} for more discussion on Kac model for the Boltzmann equation and its mean-field limit. One of the main features of Kac's particle model is that the collision preserves the momentum and energy. 

Villani \cite{villani1998new} gives the first rigorous proof of the grazing collision limit from the Boltzmann equation to the Landau equation that shows the existence of H-solutions for both by providing a weak compactness to the Coulomb singularity. Following the idea presented in \cite{villani1998new}, one can construct the Kac model and the corresponding Liouville equation for the space-homogeneous Landau equation; see Miot-Pulvirenti-Saffirio \cite{Miot2011kac} and Carrapatoso \cite{carrapatoso2016propagation}. The Liouville equation describes the evolution of $N$-particle joint distribution with initial value $f_N(0)=(f^0)^{\otimes N}$ on $\R^{3N}$, which reads as
\begin{equation}\label{eq fN}
\p_t f_{N}=\frac{1}{N}\sum_{i\neq j}A(v^i-v^j):(\nabla_{v^iv^i}^2f_N-\nabla_{v^iv^j}^2f_N)+\frac{1}{N}\sum_{i\neq j}B(v^i-v^j)\cdot (\nabla_{v^i}-\nabla_{v^j})f_N,
\end{equation}
or in a more compact form  
$$
\p_t f_{N}=\frac{1}{2N}\sum_{i\neq j}\div_{v^i-v^j}\left[A(v^i-v^j)\nabla_{v^i-v^j}f_{N}\right],
$$
where we used '$:$' as the Frobenius inner product for matrices, and used the notations
$$
\nabla_{v^i-v^j}g_{N}(V_N):=\nabla_{v^i}g_{N}(V_N)-\nabla_{v^j}g_{N}(V_N),\quad \div_{v^i-v^j}G_{N}(V_N):=\div_{v^i}G_{N}(V_N)-\div_{v^j}G_{N}(V_N)
$$ with some function $g_N$ or vector field $G_N$ on $V_N=(v^1,\ldots,v^N)\in \R^{3N}$. Note that the joint distribution $f_N$ is symmetric, namely, for $\ell\neq k$, it has
\begin{equation}\label{symmetry}
f_N(\cdots,v^k,\cdots,v^\ell,\cdots)=f_N(\cdots,v^\ell,\cdots,v^k,\cdots).    
\end{equation}
The formulation of the Liouville equation for the Kac model \eqref{eq fN}, the Landau master equation, actually dates back to Prigogine and Balescu in 1950s \cite{prigogine1959irreversibleI,prigogine1959irreversibleII}. Sometimes it also refers as Balescu-Prigogine master equation; see also \cite{kiessling2004master}. 

In fact, one can construct a particle system associated formally to the Liouville equation for the Kac model \eqref{eq fN}. The following stochastic $N$-particle systems is proposed by Carrapatoso in \cite{carrapatoso2016propagation} and has also been considered in \cite{fournier2017kac}. It reads as  
\begin{equation}\label{kacparticles}
\d V_t^i=\frac{2}{N}\sum_{j\neq i}^{N}B(V_t^i-V_t^j)\d t+\sqrt{\frac{2}{N}}\sum_{j\neq i}^{N}A^{\frac{1}{2}}(V_t^i-V_t^j)\d Z_t^{i,j},
\end{equation}
where for $1\leq i<j\leq N$,  $Z_t^{i,j}=W_t^{i,j}$ are $N(N-1)/2$   i.i.d.  $3$-dimensional Brownian motions, while $Z_t^{j,i}=-W_t^{i,j}$ are anti-symmetric, and the diffusion coefficient matrix is
the unique square root of the nonnegative symmetric matrix.  We also assume that the initial data $(V_0^1,\ldots,V_0^N)$ are i.i.d with the distribution $f^0$. Under above assumptions on the particle system, the particles are exchangeable, and conserve the total momentum and kinetic energy almost surely, namely, 
$$
    \sum_{i=1}^NV_t^i=\sum_{i=1}^NV_0^i,\quad\text{and}\quad\sum_{i=1}^N|V_t^i|^2=\sum_{i=1}^N|V_0^i|^2.
$$

It is worth mentioning that the Landau equation with a Coulomb potential is considered as the most physically significant case. However, it is particularly hard to be derived from particle systems and their associated Liouville equations due to the singularity and degeneracy of the matrix $A(z)$. Actually, it is another challenging open problem to discuss the well-posedness of the SDE system \eqref{kacparticles} for the Coulomb singularity.

In \cite{Miot2011kac}, Miot-Pulvirenti-Saffirio showed that the marginals of a modified Liouville equation, where the matrix $A(z)$ bounded from below,  weakly converge in sense of probability measures to a solution of Landau-Coulomb hierarchy \eqref{eq fN}. Also, it suggests that with a nicer matrix $A(z)$, some better results can be obtained. Carrapatoso in \cite{carrapatoso2016propagation} also showed quantitative and uniform-in-time propagation of chaos for the Landau equation with Maxwellian molecules, corresponding to the case where $A(z)=|z|^2\Pi(z)$. Finally, Fournier-Guillin proved the quantitative propagation of chaos for the Landau equation with hard potentials where they assume $A(z)=|z|^{\gamma+2}\Pi(z)$ with $\gamma\in(0,1]$;  see \cite{fournier2017kac}.

Apart from the Kac model for particles \eqref{kacparticles} with the formally associated Liouville equation given by \eqref{eq fN}, there is another formulation of the particle system. The SDE representation is given by  
$$
\d \hat{V}_t^i=\frac{2}{N}\sum_{j=1}^{N}B(\hat{V}_t^i-\hat{V}_t^j)\d t+\sqrt{2}\Big(\frac{1}{N}\sum_{j=1}^{N}A(\hat{V}_t^i-\hat{V}_t^j)\Big)^{\frac{1}{2}}\d W_t^i,
$$
with its Liouville equation
\begin{equation}\label{alternative}
\p_t \hat{f}_N= \sum_{i=1}^{N}\div_{v^{i}}\Big[\frac{1}{N}\sum_{j=1}^{N}A(v^{i}-v^{j})\nabla_{v^{i}}\hat{f}_N-\frac{1}{N}\sum_{j=1}^{N}B(v^{i}-v^{j})\hat{f}_N\Big].    
\end{equation}
Notice that the particle system does not conserve almost surely the total momentum and energy, only the expectation of total momentum and energy are conserved. The particle system has also been considered in the mean-field derivation of the Landau equation. For example, the first quantitative propagation of chaos result for the Landau equation with Maxwellian molecules is obtained by Fontbona-Guérin-Méléard in \cite{fontbona2009measurability} using optimal transport techniques, which is improved in \cite{fournier2009particle} with a better rate by the coupling method. Fournier-Hauray derived the Landau equation with moderately soft potentials where $A(z)=|z|^{\gamma+2}\Pi(z)$ with $\gamma\in(-2,0)$; they obtain the convergence rate for $\gamma\in(-1,0)$ by a refined coupling\textit{} method, and the propagation of chaos without rate for $\gamma\in(-2,-1]$ by the martingale method; see \cite{fournier2016propagation}. The relative entropy method has been used to quantitatively approximate the Landau-like equation in \cite{carrillo2024mean} and  the Landau equation with Maxwellian molecules in \cite{carrillo2024relative}. 

A recent breakthrough in kinetic theory is the work of Guillen-Silvestre \cite{guillen2025landau}, who proved the monotonicity of Fisher information for the space-homogeneous Landau equation. This result played a crucial part in proving, for the first time, the global well-posedness of classical solutions to the Landau-Coulomb equation. The monotonicity theorem was later generalised to the space-homogeneous Boltzmann equation by Imbert-Silvestre-Villani \cite{imbert2024monotonicity}. These developments further highlight the fundamental role of Fisher information in kinetic theory. For a comprehensive overview, we refer to the recent review by Villani \cite{villani2025fisher}.

Our main contribution is to 
provide the first rigorous derivation of the Landau-Coulomb hierarchy \eqref{eq Landau hierarchy} from the unmodified particle system determined by the Liouville equation \eqref{eq fN} in the strong sense. This is achieved by showing that the monotonicity of Fisher information remains valid at the level of the Kac's particle system of the Landau-Coulomb equation given by the Liouville equation \eqref{eq fN}. In contrast, this property is unclear to hold for the alternative Liouville equation \eqref{alternative}. This result constitutes one of the two key steps in the full justification of the Landau-Coulomb equation \eqref{eq Landau} from a system of mean-field interacting particles.

\subsection{Main results}

Throughout the paper, we will use the notation $V_M=(v^1,\ldots,v^M)\in \R^{3M}$ for $M\in\N$. For any density function $g_M$ on $\R^{3M}$, we denote the (renormalised) entropy by $\H_M$ and the (renormalised) Fisher information by $\I_M$ as follows,
$$
\H_M(g_M):=\frac{1}{M}\int_{\R^{3M}}g_M\log g_M,
$$
and
$$
\I_M(g_M):=\frac{1}{M}\int_{\R^{3M}}g_M|\nabla\log g_M|^2 \d V_M=\frac{1}{M}\int_{\R^{3M}}\frac{|\nabla g_M|^2}{g_M}\d V_M=\frac{4}{M}\int_{\R^{3M}}|\nabla\sqrt{ g_M}|^2 \d V_M.
$$
\begin{remark}
\label{lemma subadditivity}
If $g_N$ is a probability density function on $\R^{3N}$ and its $m$-marginal $g_{N,m}$ for any $m\in\N$ is given by integrating it over the variables from $v^{m+1}$ to $v^N$,
$$
g_{N,m}=\int_{\R^{3(N-m)}}g_N\d v^{m+1}\cdots\d v^{N},
$$
then the sub-additivity holds for both entropy and Fisher information, i.e., 
$$
\H_m(g_{N,m}):=\frac{1}{m}\int_{\R^{3m}}g_{N,m}\log g_{N,m}\d V_m\leq \frac{1}{N}\int_{\R^{3N}}g_{N}\log g_{N}\d V_N=\H_N(g_N),
$$
and 
$$
\I_m(g_{N,m}):=\frac{1}{m}\int_{\R^{3m}}g_{N,m}\left|\nabla_{3m}\log g_{N,m}\right|^2\d V_m\leq \frac{1}{N}\int_{\R^{3N}}g_{N}\left|\nabla_{3N}\log g_{N}\right|^2\d V_N=\I_N(g_N).
$$
Particularly, if $g_N$ is tensorised such as $g_N=g^{\otimes N}$, then
$$
\H_1(g)=\H_N(g_N),\quad\text{and}\quad \I_1(g)=\I_N(g_N);
$$
see the proofs for instance in \cite{hauray2014kac}.
\end{remark}  
We now present the assumptions on the initial data $f^0$. 
\begin{assumption}\label{assumption}
We assume the initial data satisfies the following conditions 
\begin{itemize}
    \item [1.] Finite mass, momentum and energy: 
    $$
    \int_{\R^3}f^0=1,\quad \int_{\R^3}f^0v_\alpha=0\,\,(\mbox{for }\alpha=1,2,3), \,\,\mbox{and  } \int_{\R^3}f^0|v|^2=3.
    $$
    \item [2.] Finite entropy: $\H_1(f^0)=\int_{\R^{3}}f^0\log f^0<+\infty$.
    \item [3.] Finite Fisher information: $\I_1(f^0)=\int_{\R^{3}}f^0|\nabla\log  f^0|^2<+\infty$.
\end{itemize}
\end{assumption}

As the main object in this work, we understand solutions of the Liouville equation \eqref{eq fN} in the following sense. 
\begin{definition}[Weak solutions of the Liouville equation]\label{def fN} 
For any $T>0$, a weak solution of the Liouville equation \eqref{eq fN} with $f_{N}(0)=(f^0)^{\otimes N}$ satisfies the following
weak form, for any compactly supported $\vphi_{N}\in W^{4,\infty}(\R^{3N})$,
\begin{equation}\label{eq weak Liouville}
\begin{aligned}
\int_{\R^{3N}}\vphi_{N} f_{N}(T)\&\d V_{N}-\int_{\R^{3N}}\vphi_{N} (f^0)^{\otimes N}\d V_{N}\\=\&\frac{1}{N}\sum_{i\neq j}^N\int_0^T\int_{\R^{3N}}A(v^i-v^j):(\nabla_{v^iv^i}^2\vphi_{N}-\nabla_{v^iv^j}^2\vphi_{N})f_{N}\d V_{N}\d t\\\&+\frac{1}{N}\sum_{i\neq j}^N\int_0^T\int_{\R^{3N}}B(v^i-v^j)\cdot (\nabla_{v^i}\vphi_{N}-\nabla_{v^j}\vphi_{N}) f_{N}\d V_{N}\d t.
\end{aligned}
\end{equation}
Additionally, it conserves mass and energy, and its entropy and Fisher information are monotonically decreasing, namely, for any $0\leq t_1\leq t_2\leq T$,  
\begin{equation}\label{fN conservation}\int_{\R^{3N}}f_{N}(t_1)\d V_N=\int_{\R^3}f^0(v^1)\d v^1,\quad\int_{\R^{3N}}f_{N}(t_1)|V_N|^2\d V_N=N\int_{\R^3}f^0(v^1)|v^1|^2\d v^1,\end{equation}
\begin{equation}\label{fN entropy decay}
\H_N(f_{N}(t_2))\leq \H_N(f_{N}(t_1))\leq  \H_N(f_{N}(0))=\H_1(f^0),
\end{equation}
and 
\begin{equation}\label{fN Fisher decay}
\I_N(f_{N}(t_2))\leq \I_N(f_{N}(t_1))\leq  \I_N(f_{N}(0))=\I_1(f^0).
\end{equation}
\end{definition}
In Section \ref{section uniform estimate}, we will show there exists a unique weak solution satisfying the definition above; see Proposition \ref{prop well-posedness of fN}. The assumption of higher regularity for the test function $\vphi_N$ in the definition is necessary because we will require the hessian $\nabla^2\vphi_N$ to be twice differentiable in order to pass to the limit, we refer to Proposition \ref{prop pass to the limit} for more details. The marginal distribution satisfies the so-called BBGKY hierarchy:
\begin{equation}\label{BBGKY}
\begin{aligned}
\p_t f_{N,m}
=\&\frac{1}{2N}\sum_{i\neq j\leq m}\div_{v^i-v^j}\left[A(v^i-v^j)\nabla_{v^i-v^j}f_{N,m}\right]\\\&+\frac{N-m}{N}\sum_{i=1}^m\div_{v^i}\int_{\R^3}\left[A(v^i-v^{m+1})\nabla_{v^i-v^{m+1}}f_{N,m+1}\right]\d v^{m+1}.
\end{aligned}
\end{equation}
Intuitively, as $N\to\infty$, the first term on the right-hand side of \eqref{BBGKY} is of order $O(m^2/N)$ and vanishes asymptotically, while the second term remains of order $O(1)$. This suggests that in the limit, the hierarchy should take the form of \eqref{eq Landau hierarchy}, whose weak solutions are defined as follows.
\begin{definition}[Weak solutions of the Landau hierarchy]\label{def Landau hierarchy}
For any $T>0$, a weak solution of Landau hierarchy \eqref{eq Landau hierarchy} with $f_m=(f^0)^{\otimes m}$ satisfies the following
weak form, for any compactly supported $\vphi_{m}\in W^{4,\infty}(\R^{3m})$, 
\begin{equation}\label{eq weak Landau hierarchy}
\begin{aligned}
  \int_{\R^{3m}}\vphi_{m}f_{m}(T)\&\d V_{m}-\int_{\R^{3m}}\vphi_{m} (f^0)^{\otimes m}\d V_{m}\\=\&\sum_{i=1}^m \int_0^T\int_{\R^{3(m+1)}}  A(v^i-v^{m+1}):\nabla_{v^iv^i}^2\vphi_{m}\,f_{m+1}\d V_{m+1}\d t\\
\&+2\sum_{i=1}^m\int_0^T\int_{\R^{3(m+1)}}  B(v^i-v^{m+1})\cdot \nabla_{v^{i}}\vphi_{m}\,f_{m+1}\d V_{m+1}\d t.
\end{aligned}
\end{equation} 
Additionally, it conserves mass and energy, and its entropy and Fisher information are uniformly bounded, namely, for any $t\in[0,T]$, 
$$
\int_{\R^{3m}}f_{m}(t)\d V_m=\int_{\R^3}f^0(v^1)\d v^1,\quad\int_{\R^{3m}}f_{m}(t)|V_m|^2\d V_m=m\int_{\R^3}f^0(v^1)|v^1|^2\d v^1,$$
$$
\H_m(f_{m}(t))\leq  \H_m(f_{m}(0))=\H_1(f^0),
$$
and 
$$
\I_m(f_{m}(t))\leq  \I_m(f_{m}(0))=\I_1(f^0).
$$
\end{definition}

Our main result is stated as follows.
\begin{theorem}\label{thm main}
For any fixed $m\in\N$, all elements in the adherence points set of the sequence $\{f_{N,m}\}_{N\in\N}$, which are $m$-th marginals of weak solutions of the Liouville equation 
\eqref{eq fN},  are weak solutions of the Landau hierarchy \eqref{eq Landau hierarchy}. In other words, there exists a subsequence $\{f_{N_\ell,m}\}_{N_\ell\in\N}$ converging to a weak solution $f_m$ of the Landau hierarchy \eqref{eq Landau hierarchy} when $N_\ell\to\infty$. 
\end{theorem}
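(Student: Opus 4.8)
Fix $m$. Combining the decay estimates \eqref{fN Fisher decay}, \eqref{fN entropy decay} and the conservation law \eqref{fN conservation} with the sub-additivity of Remark~\ref{lemma subadditivity} and the tensorisation of the datum, one gets, for all $N\ge m$ and all $t\in[0,T]$, that $\I_m(f_{N,m}(t))\le \I_1(f^0)$, $\H_m(f_{N,m}(t))\le \H_1(f^0)$ and $\int_{\R^{3m}}f_{N,m}(t)|V_m|^2\,\d V_m=3m$; in particular $\sqrt{f_{N,m}}$ is bounded in $L^\infty(0,T;H^1(\R^{3m}))$, uniformly in $N$. Integrating \eqref{fN entropy decay} in time moreover bounds the time-integrated entropy dissipation $\int_0^T\frac1N\sum_{i\neq j}\int|A^{1/2}(v^i-v^j)(\nabla_{v^i}-\nabla_{v^j})\sqrt{f_N}|^2\,\d V_N\,\d t$ by $\H_1(f^0)$, and a sub-additivity argument for the dissipation then bounds $\int_0^T\int_{\R^{3(m+1)}}|A^{1/2}(v^i-v^{m+1})(\nabla_{v^i}-\nabla_{v^{m+1}})\sqrt{f_{N,m+1}}|^2\,\d V_{m+1}\,\d t$ uniformly in $N$.

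\textbf{Step 2 (compactness).} For each fixed $t$, the $H^1$-bound on $\sqrt{f_{N,m}(t)}$ together with the uniform second moment gives, by Rellich--Kondrachov plus tightness, relative compactness of $\{f_{N,m}(t)\}_N$ in strong $L^1(\R^{3m})$. For the time variable, insert $\vphi_N=\vphi_m(V_m)$ in \eqref{eq weak Liouville}: each singular term is bounded uniformly in $N$ and $t$ by Hardy's inequality, which controls $\int|v^i-v^{m+1}|^{-1}f_{N,m+1}$ and $\int|v^i-v^{m+1}|^{-2}f_{N,m+1}$ by $\I_{m+1}(f_{N,m+1})$, so $t\mapsto\langle\vphi_m,f_{N,m}(t)\rangle$ is uniformly Lipschitz. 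A diagonal extraction over a countable dense family of test functions, over time, and over $m$ yields a subsequence along which $f_{N_\ell,m}\to f_m$ in $C([0,T];L^1\text{-weak})$ for every $m$; with the per-time strong compactness this upgrades to $f_{N_\ell,m}(t)\to f_m(t)$ in strong $L^1(\R^{3m})$ for every $t$ (and a.e.\ in $(t,V_m)$), while $\sqrt{f_m}\in L^\infty(0,T;H^1)$ with Fisher information and entropy of $f_m$ bounded by lower semicontinuity, and mass conserved by tightness. Since these bounds hold for all $N$, every adherence point of $\{f_{N,m}\}_N$ is obtained this way.

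\textbf{Step 3 (passage to the limit; the hard part).} Taking $\vphi_N=\vphi_m(V_m)$ in \eqref{eq weak Liouville} and using exchangeability, the right-hand side splits into ``internal'' terms carrying the prefactor $O(m^2/N)\to0$ (uniformly bounded as in Step~2) and ``cross'' terms which, by the bookkeeping already indicated below \eqref{BBGKY}, are precisely the right-hand side of \eqref{eq weak Landau hierarchy} with $f_{m+1}$ replaced by $f_{N,m+1}$. The whole point is the limit in the integrals against $A(z)\sim|z|^{-1}$ and, especially, $B(z)\sim|z|^{-2}$. For the $A$-terms the singularity is integrable: the law of $v^i-v^{m+1}$ under $f_{N,m+1}$ is bounded in $L^3(\R^3)$ (since its square root lies in $\dot H^1\hookrightarrow L^6$), so $|v^i-v^{m+1}|^{-1}f_{N,m+1}$ is uniformly integrable on $\R^{3(m+1)}$ --- near the diagonal by the $L^3$ bound, in the far field because the kernel decays on the compact $v^i$-support of $\vphi_m$ --- and Vitali's theorem together with the a.e.\ convergence of $f_{N_\ell,m+1}$ gives the limit. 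The $B$-term, with its non-locally-integrable $|z|^{-2}$ singularity, is the genuine obstacle. Here I would exploit the algebra $A(z)=\nabla^2_z|z|$, $B(z)=\nabla_z\Delta_z|z|$: integrating by parts (twice, which is why $\vphi_m\in W^{4,\infty}$ is required, so that two $z$-derivatives may fall on $\nabla^2\vphi_m$) trades the singular Hessian for the locally bounded potential $|z|$, at the price of lower-order terms carrying one gradient of $f_{N,m+1}$; these residual terms take the form $\sqrt{f_{N,m+1}}\,[A^{1/2}\Phi]\cdot[A^{1/2}(\nabla_{v^i}-\nabla_{v^{m+1}})\sqrt{f_{N,m+1}}]$ with $\Phi$ bounded and supported in $v^i$, and I close them by a weak--strong argument: the first factor converges strongly in $L^2([0,T]\times\R^{3(m+1)})$ (its square $\le|v^i-v^{m+1}|^{-1}|\Phi|^2f_{N,m+1}$ is uniformly integrable as above), the second is bounded in $L^2$ by the entropy-dissipation estimate of Step~1, hence weakly convergent, and the pairing passes to the limit because $A^{1/2}(A^{1/2}\Phi\sqrt{f_{m+1}})=A\Phi\sqrt{f_{m+1}}\in L^2$ --- which is exactly where the Fisher-information bound on the limit $f_{m+1}$ enters, via $\int|v^i-v^{m+1}|^{-2}f_{m+1}<\infty$. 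Boundary terms in all these integrations by parts vanish because $f_{N,m+1}\in L^3$ in the relative variable, controlling surface integrals on spheres $\{|v^i-v^{m+1}|=\eps\}$ along a suitable sequence $\eps\to0$. A reverse integration by parts in the limit (licit since $f_{m+1}\in W^{1,1}_{\mathrm{loc}}$ and $\int|v^i-v^{m+1}|^{-2}f_{m+1}<\infty$) recovers the form \eqref{eq weak Landau hierarchy}; dominated convergence in $t$ (using the uniform-in-$(t,N)$ bounds) upgrades the pointwise-in-$t$ identity to the time-integrated one; and the energy conservation, entropy and Fisher bounds for $f_m$ follow from the convergence and lower semicontinuity. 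The main obstacle is thus this passage through the $|z|^{-2}$ Coulomb singularity, where the Fisher-information monotonicity --- through both the marginal bounds and the bound on the limit --- must be combined with the entropy dissipation, in the spirit of Villani's weak compactness, now carried to the level of the hierarchy.
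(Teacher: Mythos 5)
Your Steps 1--2, your treatment of the $O(m^2/N)$ internal terms, and your Vitali argument for the $A$-term essentially reproduce the paper's scheme (Fisher/entropy sub-additivity, Dunford--Pettis plus the $H^1\hookrightarrow L^p$ embedding for per-time strong compactness, equicontinuity of $t\mapsto\int\psi_m f_{N,m}(t)\d V_m$, Arzel\`a--Ascoli; your uniform-integrability/Vitali step is interchangeable with the paper's $\chi_{\delta,in},\chi_{\delta,mid},\chi_{\delta,out}$ splitting). The divergence, and the gap, is in the $B$-term. A first factual point: $|z|^{-2}$ \emph{is} locally integrable in $\R^3$, and the paper needs neither entropy dissipation nor an H-solution reformulation there. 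Its mechanism is a symmetrisation you did not use: since $B$ is odd and $f_{N,m+1}$ is exchangeable in $(v^i,v^{m+1})$, identity \eqref{eq Vmi} replaces $\nabla_{v^i}\vphi_m(V_m)$ by the difference $\nabla_{v^i}\vphi_m(V_m)-\nabla_{v^i}\vphi_m(V_m^{i,m+1})$, which is of size $|v^i-v^{m+1}|$ because $\nabla^2\vphi_m$ is bounded; the effective kernel is then $|z|^{-1}$ (or $|z|^{-3/2}$ after paying a factor $\sqrt{\delta}$ near the diagonal), and everything closes with the same Fisher-information/$L^3$ bound on the relative marginal already used for $A$, together with the in/mid/out partition and the strong $L^1_{loc}$ convergence.

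Second, your replacement machinery rests on an unjustified load-bearing estimate: the time-integrated entropy-dissipation bound does \emph{not} follow from \eqref{fN entropy decay}, which (as part of Definition \ref{def fN}, the only information Theorem \ref{thm main} assumes about $f_N$) asserts monotonicity of $\H_N$ but not the entropy identity equating the entropy drop with $\int_0^T\frac{1}{2N}\sum_{i\neq j}\int f_N\,\big|A^{1/2}(v^i-v^j)\nabla_{v^i-v^j}\log f_N\big|^2\d V_N\d t$. To obtain it you would have to go back to the regularised flow \eqref{eq fNeps} and prove lower semicontinuity of the dissipation, then additionally prove the asserted ``sub-additivity of the dissipation'' down to $f_{N,m+1}$, and finally identify the weak $L^2_{t,V}$ limit of $A^{1/2}\nabla_{v^i-v^{m+1}}\sqrt{f_{N,m+1}}$ with $A^{1/2}\nabla_{v^i-v^{m+1}}\sqrt{f_{m+1}}$ --- none of which is immediate and none of which appears in the paper. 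Moreover, the bookkeeping ``integrate by parts twice so that the kernel becomes $|z|$'' cannot be taken literally: it would place two derivatives on $f_{N,m+1}$, which you do not control; the viable version of your plan is a single integration by parts back to the H-form $-2\int\sqrt{f_{N,m+1}}\,A^{1/2}\nabla_{v^i}\vphi_m\cdot A^{1/2}\nabla_{v^i-v^{m+1}}\sqrt{f_{N,m+1}}$, i.e.\ Villani's weak formulation carried to the hierarchy. In short, your compactness part is sound, but the passage through the Coulomb singularity as written uses estimates not granted by the hypotheses, whereas the antisymmetry cancellation of \eqref{eq Vmi} makes that machinery unnecessary.
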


For a more detailed convergence result, we refer to  Proposition \ref{prop strong convergence}.
\begin{remark}
If the uniqueness of weak solutions of \eqref{eq Landau hierarchy} can be established, then the weak solution should coincide with the tensorised solution of the Landau equation \eqref{eq Landau}. As a consequence, uniqueness would imply the propagation of chaos, meaning that any convergent subsequence in Theorem \ref{thm main} has the unique limit given by the tensorised solution. This would close the gap of full derivation of the Landau-Coulomb equation \eqref{eq Landau} such as for any time $t>0$, $\lim_{N\to\infty}f_{N,m}=f^{\otimes m}$.
\end{remark}

The rest of the paper is organised as follows. In Section \ref{section uniform estimate}, we will show the well-posedness of the Liouville equation \eqref{eq fN}. A key novelty of our argument lies in proving the decay of the Fisher information for the Liouville equation \eqref{eq fN}.  Sections \ref{sec convergence} and \ref{sec pass limit} are then dedicated to the compactness arguments required to complete the proof of Theorem \ref{thm main}.

\section{Uniform estimate of the Liouville equation}\label{section uniform estimate}
We begin this section by stating the well-posedness of the Liouville equation. As the core ingredient of the proof, we will show that the entropy and Fisher information are monotonically decreasing along the Liouville equation.

\begin{proposition}[Well-posedness of the Liouville equation]\label{prop well-posedness of fN}
Under Assumption \ref{assumption}, there exists a unique weak solution defined in Definition \ref{def fN} of the Liouville equation \eqref{eq fN}. 
\end{proposition}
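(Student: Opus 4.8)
The plan is to split the statement into three parts: (i) existence of a weak solution enjoying the conservation laws and the entropy/Fisher monotonicity; (ii) uniqueness within the class of Definition 1.5; and (iii) verification that any such solution indeed satisfies all the listed properties. The natural route to existence is a regularisation-and-compactness argument. First I would replace the singular, degenerate matrix $A(z)=a(z)\Pi(z)$ by a smooth, uniformly elliptic approximation $A_\eps(z)=a_\eps(z)\Pi(z)+\eps\,\Id$ with $a_\eps$ a bounded regularisation of $1/|z|$ (truncated near the origin and at infinity), keeping the structure so that $B_\eps=\div A_\eps$ is bounded and smooth. For fixed $\eps$ the equation $\p_t f_N^\eps=\tfrac{1}{2N}\sum_{i\neq j}\div_{v^i-v^j}[A_\eps(v^i-v^j)\nabla_{v^i-v^j}f_N^\eps]$ is a nondegenerate linear parabolic equation with smooth coefficients, so classical theory gives a unique smooth positive solution with initial data $(f^0)^{\otimes N}$ (after mollifying $f^0$ as well, if needed, and removing the mollification at the end). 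One checks directly that $f_N^\eps$ conserves mass, momentum and energy — the latter because the diffusion acts only in the $v^i-v^j$ directions while $A_\eps(z)z=0$ up to the $\eps\Id$ term, which must be handled by a small correction or by choosing the regularisation to preserve $\Pi(z)z=0$ exactly so energy is conserved modulo an $O(\eps)$ error that vanishes in the limit.

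The core of the argument is the a priori estimate: the entropy $\H_N(f_N^\eps(t))$ and the Fisher information $\I_N(f_N^\eps(t))$ are nonincreasing in $t$, uniformly in $\eps$ and $N$. For entropy this is the standard computation $\tfrac{d}{dt}\int f_N\log f_N=-\tfrac{1}{2N}\sum_{i\neq j}\int \nabla_{v^i-v^j}\log f_N\cdot A(v^i-v^j)\nabla_{v^i-v^j}\log f_N\,f_N\le 0$ by nonnegativity of $A$. For Fisher information the key input is the Guillen–Silvestre monotonicity result adapted to the many-particle operator: because each summand $\div_{v^i-v^j}[A(v^i-v^j)\nabla_{v^i-v^j}\cdot]$ is, after the change of variables to the difference and sum variables, exactly the one-particle Landau collision operator acting in the $v^i-v^j$ variable with the other coordinate frozen, one can invoke the $3$-dimensional Fisher-information monotonicity for the Landau–Coulomb operator on each pair and sum. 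The delicate point — and I expect this to be the main obstacle — is that the Guillen–Silvestre estimate is a differential inequality for a single copy of the operator, whereas here many copies act simultaneously on a correlated density $f_N$; one must show the pairwise contributions do not conspire to increase $\I_N$. The cleanest way is to prove the pointwise/integrated inequality directly: writing $h=\sqrt{f_N^\eps}$ or working with the Bakry–Émery-type identity for $\int f_N|\nabla\log f_N|^2$, one differentiates in time and must show that the resulting quadratic form in second derivatives of $\log f_N$ is nonnegative, which is precisely the content of the "good matrix" computation of Guillen–Silvestre, now organised over the $N$-particle configuration. The uniform-in-$\eps$ bound then comes from $A_\eps\le A + \eps\Id$ and the fact that $\I_N(f_N^\eps(0))\to\I_1(f^0)$.

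With these bounds in hand, compactness is routine: uniform Fisher information gives weak compactness of $\sqrt{f_N^\eps}$ in $H^1_{\mathrm{loc}}$ hence strong $L^1_{\mathrm{loc}}$ compactness of $f_N^\eps$ in space, the weak formulation supplies equicontinuity in time (the test functions are in $W^{4,\infty}$, so the singular terms $A:\nabla^2\vphi_N$ and $B\cdot\nabla\vphi_N$ are integrable against $f_N^\eps$ uniformly, using $|A(z)|\lesssim 1/|z|$, $|B(z)|\lesssim 1/|z|^2$ and the control of $\int f_N |v^i-v^j|^{-2}$ by Fisher information via a Hardy inequality), and one passes to the limit $\eps\to 0$ in \eqref{eq weak Liouville}. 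Lower semicontinuity of entropy and Fisher information under this convergence transfers \eqref{fN entropy decay}–\eqref{fN Fisher decay} to the limit, and the conservation laws pass by testing against (approximations of) $1$ and $|V_N|^2$; this yields a weak solution as in Definition 1.5. Finally, for uniqueness I would argue at the level of the weak formulation: given two weak solutions with the same initial data, the uniform Fisher bound makes the singular coefficients $A(v^i-v^j)$ effectively $L^2$-integrable against the solutions, which together with the linearity of the Liouville equation allows a Gronwall/energy estimate on the difference — most transparently by a duality argument solving the regularised backward equation with terminal data a smooth test function and showing the $\eps$-error vanishes using the a priori bounds. The well-posedness statement of Proposition 2.2 then follows.
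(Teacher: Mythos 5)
Your overall architecture (regularise, prove entropy/Fisher monotonicity uniformly in $\eps$ and $N$, extract compactness, pass to the limit, then prove uniqueness) is the same as the paper's, but there is a genuine gap exactly at the step you yourself flag as "the main obstacle": the monotonicity of $\I_N$ along the $N$-particle flow. You assert that, after freezing the other coordinates, each pair operator is handled by the Guillen--Silvestre computation and that the remaining question of whether "the pairwise contributions conspire" is "precisely the content of the good matrix computation of Guillen--Silvestre, now organised over the $N$-particle configuration". It is not. Writing $\I_N=\sum_k \I_N^k$ as in \eqref{def Ik}, the Guillen--Silvestre two-particle result (Lemma \ref{lemma deacy of Fisher I2}, i.e.\ the monotonicity of the six-dimensional $\I_2$ of a symmetric function under the pair operator) only controls the contributions with $k\in\{i,j\}$, applied pointwise in the frozen variables $\hat V^{i,j}_N$ and then integrated. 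The genuinely new terms are the cross contributions $\<(\I_N^k)',\div_{v^i-v^j}[a\Pi\nabla_{v^i-v^j}g_N]\>$ with $k\notin\{i,j\}$, which act on a correlated $g_N$ and are not covered by any one- or two-particle statement. The paper (Lemma \ref{lemma Liouviell flow}) disposes of them by a separate, elementary but essential computation: using that $a(v^i-v^j)\Pi(v^i-v^j)$ is independent of $v^k$, two integrations by parts reduce $\textbf{Term}_{k\neq i\neq j}$ to $-\frac{1}{N^2}\sum_{i\neq j\neq k}\int a\,g_N^{-1}\,|\Pi M^{ijk}-\Pi\eta^{ij}\otimes\xi^{k}|^2\leq 0$, a completion of squares in the mixed Hessian $M^{ijk}=\nabla_{v^i-v^j}\nabla_{v^k}g_N$ (the same mechanism already needed for the heat flow, Lemma \ref{lemma heat flow}). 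Without this argument your proof of the key a priori bound \eqref{fN Fisher decay} is an assertion, not a proof; supplying it is the central novelty of the paper.

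Two secondary points. First, your regularisation $A_\eps=a_\eps\Pi+\eps\Id$ placed inside the pair operator is still degenerate: $\sum_{i\neq j}\Delta_{v^i-v^j}$ annihilates the centre-of-mass direction, so classical uniformly parabolic theory does not directly give the smooth positive solution you invoke. The paper instead keeps the cut-off $a^\eps$ and adds a full vanishing Laplacian $\eps\sum_i\Delta_{v^i}$ (equation \eqref{eq fNeps}); this restores uniform ellipticity, its effect on the energy is the explicit $6N\eps t$ in \eqref{eq uniform 2nd}, and its compatibility with the entropy/Fisher decay is exactly Lemma \ref{lemma heat flow} — note the cut-off must also be chosen so that $a^\eps$ still satisfies \eqref{ineq bound of a}, a constraint your $a_\eps$ would need to respect too. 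Second, your uniqueness sketch (duality/Gronwall) is vaguer than what is needed; the paper's Remark \ref{Liouville uniqueness} gives a one-line relative entropy computation, $\frac{\d}{\d t}\int f_N'\log(f_N'/f_N'')\leq 0$, which uses only the nonnegativity of $a\Pi$ and the a priori bounds, and is the natural argument in this class of solutions. Your time-equicontinuity estimate via Hardy's inequality for $\int f_{N,2}|v^1-v^2|^{-2}$ is a legitimate alternative to the paper's symmetrisation-plus-Sobolev bound \eqref{ineq double integral}, so that part is fine.
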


Recall the Liouville equation 
$$
\p_tf_N=\frac{1}{2N}\sum_{i\neq j}\div_{v^i-v^j}\left[A(v^i-v^j)\nabla_{v^i-v^j}f_{N}\right],
$$
where $A(z)=a(z)\Pi(z)$ with the singular potential $a(z)=1/|z|$ and the degenerate projection matrix $\Pi(z)$. To prove the existence of weak solutions of the Liouville equation  \eqref{eq fN}, it is crucial to show the entropy and Fisher information of $f_{N}^\eps$ are decreasing uniformly-in-$\eps$ along the flow given by a suitable regularised flow \eqref{eq fNeps}, and then pass to the limit in \eqref{eq fNeps}. For  notational simplicity, we first present the calculations for any  positive smooth symmetric  functions $g_N\in C^1\left([0,T],C^\infty(\R^3)\right)$ with rapid decay at infinity. We postpone the introduction of regularised system after Lemma \ref{lemma Liouviell flow} in order to highlight the main ingredients of the proof. We use the Gateaux derivative to present the functional derivative of entropy and Fisher information along the flow given by operator $Q(\cdot)$, respectively, 
$$
\<\left(\H_N(g_N)\right)',Q(g_N)\>:=\frac{\d}{\d \tau}\H_N\left(g_N+\tau Q(g_N)\right)\bigg|_{\tau=0}=\frac{1}{N}\int_{\R^{3N}}\log g_N Q(g_N)\d V_N, 
$$
and
\begin{equation}\label{def diff fisher1}
\begin{aligned}
\<\left(\I_N(g_N)\right)',Q(g_N)\>:=\&\frac{\d}{\d \tau}\I_N\left(g_N+\tau Q(g_N)\right)\bigg|_{\tau=0}\\=\&\frac{1}{N}\int_{\R^{3N}} Q(g_N)|\nabla\log g_N|^2\d V_N+\frac{2}{N}\int_{\R^{3N}}g_N\nabla \log g_N\cdot \nabla\frac{Q(g_N)}{g_N} \d V_N,
\end{aligned}
\end{equation}
where the gradient is taken with respect to $V_N\in\R^{3N}$, or alternatively, it has
\begin{equation}\label{def diff fisher2}
\begin{aligned}
\<\left(\I_N(g_N)\right)',Q(g_N)\>=\frac{2}{N}\int_{\R^{3N}}\frac{\nabla g_N}{g_N} \cdot \nabla Q(g_N)\d V_N-\frac{1}{N}\int_{\R^{3N}}\frac{|\nabla g_N|^2}{g_N} Q(g_N)\d V_N.
\end{aligned}
\end{equation}
We first remind the reader the results for the heat flow in order to introduce suitable notations and draw comparisons between the heat and the Landau operators \eqref{eq fN} acting on $g_N$. 

\begin{lemma}\label{lemma heat flow}
For any  positive smooth symmetric  function $g_N\in C^1\left([0,T],C^\infty(\R^{3N})\right)$ with rapid decay at infinity, the entropy and Fisher information of $g_N$ is decreasing along the heat flow, namely   
$$
\<\left(\H_N(g_N)\right)',\Delta g_N)\>\leq 0,
$$  
and
$$
\<\left(\I_N(g_N)\right)',\Delta g_N)\>\leq 0.
$$ 
\end{lemma}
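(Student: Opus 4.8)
The plan is to verify the two Fisher-type inequalities by direct computation, treating the heat operator $Q(g_N)=\Delta g_N$ in the general formulas \eqref{def diff fisher1}--\eqref{def diff fisher2}. For the entropy, substituting $Q(g_N)=\Delta g_N$ into $\<\left(\H_N(g_N)\right)',\Delta g_N\>=\frac1N\int_{\R^{3N}}\log g_N\,\Delta g_N\,\d V_N$ and integrating by parts once gives $-\frac1N\int_{\R^{3N}}\nabla\log g_N\cdot\nabla g_N\,\d V_N=-\frac1N\int_{\R^{3N}}\frac{|\nabla g_N|^2}{g_N}\,\d V_N=-\I_N(g_N)\le 0$; the boundary terms vanish by the rapid decay of $g_N$ and its derivatives. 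This is the standard "entropy dissipation equals Fisher information" identity.

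For the Fisher information, I would use the representation $\I_N(g_N)=\frac4N\int_{\R^{3N}}|\nabla\sqrt{g_N}|^2\d V_N$ together with the fact that $\sqrt{g_N}$ also solves a (transformed) heat-type relation, or more directly compute $\<\left(\I_N(g_N)\right)',\Delta g_N\>$ from \eqref{def diff fisher1}. Writing $u=\log g_N$, the derivative becomes $\frac1N\int \Delta g_N\,|\nabla u|^2 + \frac2N\int g_N\,\nabla u\cdot\nabla\!\left(\frac{\Delta g_N}{g_N}\right)$. After integrating by parts and using the Bochner-type identity $\frac12\Delta|\nabla u|^2=|\nabla^2 u|^2+\nabla u\cdot\nabla\Delta u$ (applied in $\R^{3N}$, where there is no curvature contribution), the cross terms combine and one is left with $\<\left(\I_N(g_N)\right)',\Delta g_N\>=-\frac2N\int_{\R^{3N}}g_N\,\|\nabla^2\log g_N\|^2\,\d V_N\le 0$, where $\|\cdot\|$ is the Frobenius norm on the $3N\times3N$ Hessian. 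Equivalently, one can phrase this via the classical computation that along the heat flow $\frac{\d}{\d t}\I = -2\int g\,|\nabla^2\log g|^2$, which is nonpositive; the point here is purely algebraic since we are differentiating the functional along the direction $\Delta g_N$ rather than along an actual time evolution.

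The main obstacle is bookkeeping rather than conceptual: one must carry out the integrations by parts carefully so that every boundary term is controlled by the assumed rapid decay of $g_N\in C^1([0,T],C^\infty(\R^{3N}))$, and one must correctly expand $\nabla\!\left(\Delta g_N/g_N\right)$ in terms of $\log g_N$, i.e. $\Delta g_N/g_N=\Delta\log g_N+|\nabla\log g_N|^2$, so that the Bochner identity can be applied cleanly. I expect the cleanest route is to write everything in terms of $u=\log g_N$ from the outset: then $\Delta g_N = g_N(\Delta u+|\nabla u|^2)$, and the whole expression for $\<(\I_N)',\Delta g_N\>$ reduces, after two integrations by parts against $g_N\,\d V_N$, to $-\frac2N\int g_N |\nabla^2 u|^2$, which manifestly has the right sign. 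This sets up the notational framework ($u=\log g_N$, Frobenius norms of Hessians, boundary terms absorbed by decay) that will be reused for the genuine Landau operator \eqref{eq fN}, where the analogous computation produces the extra geometric terms coming from $A(v^i-v^j)$ that must then be shown to preserve the sign.
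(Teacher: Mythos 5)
Your proposal is correct, and the entropy part coincides with the paper's computation verbatim. For the Fisher information, your route is the classical one: work with $u=\log g_N$ on all of $\R^{3N}$, use $\Delta g_N/g_N=\Delta u+|\nabla u|^2$, apply the Bochner identity once globally, and after two integrations by parts (justified by the rapid decay) arrive at $\<(\I_N(g_N))',\Delta g_N\>=-\tfrac{2}{N}\int_{\R^{3N}} g_N\,|\nabla^2\log g_N|^2\,\d V_N\leq 0$ with the full $3N\times 3N$ Hessian in Frobenius norm. The paper reaches exactly the same quantity but organizes the computation differently: it splits $\I_N=\sum_k\I_N^k$ as in \eqref{def Ik} and separates $\textbf{Term}_{i=k}$ from $\textbf{Term}_{i\neq k}$; the diagonal blocks are treated by the blockwise Bochner identity \eqref{eq Bochner} leading to \eqref{ineq Term1}, while the cross blocks are handled without Bochner, by the explicit identities \eqref{eq term21}--\eqref{eq term22} and a completion of the square in the quantities $\bar{M}^{ik}$, $\bar{\eta}^i$, $\bar{\xi}^k$, giving \eqref{ineq term2}; since $g_N|\nabla_{v^i}\nabla_{v^k}\log g_N|^2=|\bar M^{ik}-\bar\eta^i\otimes\bar\xi^k|^2/g_N$, the two routes agree term by term. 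What your global argument buys is brevity and a cleaner statement of the standard heat-flow dissipation; what the paper's blockwise bookkeeping buys is that it is precisely the template for Lemma \ref{lemma Liouviell flow}, where a global Bochner identity is unavailable because the coefficient $a(v^i-v^j)\Pi(v^i-v^j)$ depends on the particle pair and is degenerate: there the pair terms $k\in\{i,j\}$ are delegated to the two-particle result of Lemma \ref{lemma deacy of Fisher I2}, and only the three-index terms $k\neq i\neq j$ are handled by the same square-completion with $\Pi$ inserted. So your proof of the lemma as stated is complete, but if you intend to reuse the framework for the Landau operator, you will in any case need to redo the computation in the paper's split form rather than via the global Hessian identity.
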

\begin{proof}[Proof of Lemma \ref{lemma heat flow}]
By the definition and integration by parts, it is not hard to obtain the decay of the entropy, such as
$$
\begin{aligned}
 \<\left(\H_N(g_N)\right)',\Delta g_N)\>=\& \frac{1}{N}\sum_{i=1}^N\int_{\R^{3N}}\log g_N \big(\Delta_{v^i}g_N\big)\d V_N =  -\frac{1}{N}\sum_{i=1}^N\int_{\R^{3N}}\frac{ |\nabla_{v^i}g_N|^2}{g_N}\d V_N\leq 0.
\end{aligned}
$$
In terms of the Fisher information, we further define that for any $k=1,2,\ldots,N$,
\begin{equation}\label{def Ik}
\I_N(g_N)=\sum_{k=1}^N\I_N^k(g_N):=\frac{1}{N}\sum_{k=1}^N\int_{\R^{3N}}\frac{|\nabla_{v^k} g_N|^2}{g_N} \d V_N.
\end{equation}
Then it holds
$$
\begin{aligned}
\<\left(\I_N(g_N)\right)',\Delta 
g_N)\>\&=\Big\<\big(\sum_{k=1}^N\I_N^k(g_N)\big)',\sum_{i=1}^N\Delta_{v^i} g_N)\Big\>\\\&=\sum_{i=1}^N\Big\<\big(\I_N^i(g_N)\big)',\Delta_{v^i} g_N)\Big\>+\sum_{i\neq k}\Big\<\big(\I_N^k(g_N)\big)',\Delta_{v^i} g_N)\Big\>\\\&=\textbf{Term}_{i=k}+\textbf{Term}_{i\neq k},
\end{aligned}
$$
where by \eqref{def diff fisher1} and \eqref{def diff fisher2} we have
\begin{equation}\label{eq Term1}
\textbf{Term}_{i=k}=\frac{1}{N}\sum_{i=1}^N\bigg(\int_{\R^{3N}} \Delta_{v^i}g_N|\nabla_{v^i}\log g_N|^2\d V_N+2\int_{\R^{3N}}g_N\nabla_{v^i} \log g_N\cdot \nabla_{v^i}\frac{\Delta_{v^i}g_N}{g_N} \d V_N\bigg),
\end{equation}
and
\begin{equation}\label{eq Term2}
\textbf{Term}_{i\neq k}=\frac{2}{N}\sum_{i\neq k}\int_{\R^{3N}}\frac{\nabla_{v^k}g_N}{g_N}\cdot \nabla_{v^k}\big(\Delta_{v^i}g_N\big)\d V_N-\frac{1}{N}\sum_{i\neq k}\int_{\R^{3N}}\frac{|\nabla_{v^k}g_N|^2}{g_N^2}\big(\Delta_{v^i}g_N\big)\d V_N.
\end{equation}
The computation to simplify $\textbf{Term}_{i=k}$ can be found in, for example \cite{ledoux2022differentials}.
Applying Bochner's formula 
\begin{equation}\label{eq Bochner}
\nabla_{v^i}\log g_N \cdot \nabla_{v^i} \Delta_{v^i}\log g_N=\frac{1}{2}\Delta_{v^i} |\nabla_{v^i}\log  g_N|^2-|\nabla_{v^i}^2 \log g_N|^2,
\end{equation}
we have
\begin{equation}\label{ineq Term1}
\begin{aligned}
\textbf{Term}_{i=k}=\&\frac{1}{N}\sum_{i=1}^N\bigg(\int_{\R^{3N}} \Delta_{v^i}g_N|\nabla_{v^i}\log g_N|^2\d V_N+2\int_{\R^{3N}}g_N\nabla_{v^i} \log g_N\cdot \nabla_{v^i}\Delta_{v^i}\log g_N \d V_N\\\&+2\int_{\R^{3N}}g_N\nabla_{v^i} \log g_N\cdot \nabla_{v^i}|\nabla_{v^i}\log g_N |^2\d V_N\bigg)\\=\&\frac{1}{N}\sum_{i=1}^N\bigg(\int_{\R^{3N}} \Delta_{v^i}g_N|\nabla_{v^i}\log g_N|^2\d V_N+\int_{\R^{3N}}g_N\Delta_{v^i} | \nabla_{v^i}\log g_N|^2 \d V_N\\\&-2\int_{\R^{3N}}g_N | \nabla_{v^i}^2\log g_N|^2 \d V_N+2\int_{\R^{3N}}\nabla_{v^i}  g_N\cdot \nabla_{v^i}|\nabla_{v^i}\log g_N |^2\d V_N\bigg)\\=\&-\frac{2}{N}\sum_{i=1}^N\int_{\R^{3N}}g_N | \nabla_{v^i}^2\log g_N|^2 \d V_N\leq 0.
\end{aligned}
\end{equation}
For the first term in $\textbf{Term}_{i\neq k}$ \eqref{eq Term2}, we rewrite it into components and obtain the identity
\begin{equation}\label{eq term21}
\begin{aligned}
\int_{\R^{3N}}\frac{\nabla_{v^k}g_N}{g_N}\cdot \&\nabla_{v^k}\Big(\div_{v^i}\nabla_{v^i}g_N\Big)\d V_N= -\sum_{\alpha,\gamma=1}^3\int_{\R^{3N}}\p_{v^i_\alpha} \big(\frac{\p_{v^k_\gamma}g_N}{g_N}\big)  \p_{v^k_\gamma} \p_{v^i_\alpha}g_N\d V_N\\
=\& -\sum_{\alpha,\gamma}\int_{\R^{3N}} \frac{\p_{v^i_\alpha}\p_{v^k_\gamma}g_N}{g_N}  \p_{v^i_\alpha}\p_{v^k_\gamma} g_N\d V_N +\sum_{\alpha,\gamma}\int_{\R^{3N}} \frac{\p_{v^i_\alpha}g_N\p_{v^k_\gamma}g_N}{g_N^2}  \p_{v^i_\alpha}\p_{v^k_\gamma} g_N\d V_N.\\
=\& -\int_{\R^{3N}}\frac{\bar{M}^{ik}:\bar{M}^{ik}}{g_N}\d V_N+\int_{\R^{3N}}\frac{(\bar{\eta}^{i}\otimes\bar{\xi}^{k}):\bar{M}^{ik}}{g_N}\d V_N,
\end{aligned}
\end{equation}
by letting matrix $\bar{M}^{ik}$ be
$$
\bar{M}^{ik}:=\nabla_{v^i}\nabla_{v^k}g_N=\big(\p_{v^i_\alpha}\p_{v^k_\gamma} g_N\big)_{\alpha,\gamma=1,2,3},$$
and vectors $\bar{\eta}$, $\bar{\xi}$ be 
$$ 
\bar{\eta}^i:=\nabla_{v^i}g_N=\big(\p_{v^i_\alpha} g_N\big)_{\alpha=1,2,3},\quad \bar{\xi}^k:=\frac{\nabla_{v^k}g_N}{g_N}=\left(\frac{\p_{v^k_\gamma} g_N}{g_N}\right)_{\gamma=1,2,3}.
$$
The second term in $\textbf{Term}_{i\neq k}$ satisfies the following identity 
\begin{align}\label{eq term22}
\notag\int_{\R^{3N}}\frac{|\nabla_{v^k}g_N|^2}{g_N^2}\&\div_{v^i}\big(\nabla_{v^i}g_N\big)\d V_N=- \int_{\R^{3N}}\nabla_{v^i}\frac{|\nabla_{v^k}g_N|^2}{g_N^2}\cdot\big(\nabla_{v^i}g_N\big)\d V_N
\\\notag
=\&-2\sum_{\alpha,\gamma}\int_{\R^{3N}}\frac{\p_{v^k_\gamma}g_N}{g_N}\frac{\p_{v^i_\alpha}\p_{v^k_\gamma}g_N}{g_N}  \p_{v^i_\alpha}g_N\d V_N+2\sum_{\alpha,\gamma}\int_{\R^{3N}}\frac{(\p_{v^k_\gamma}g_N)^2\p_{v^i_\alpha}g_N}{g_N^3}\p_{v^i_\alpha}g_N\d V_N\\
=\&-2\int_{\R^{3N}}\frac{(\bar{\eta}^i\otimes\bar{\xi}^k):\bar{M}^{ik}}{g_N}\d V_N+2\int_{\R^{3N}}\frac{|\bar{\eta}^i|^2|\bar{\xi}^k|^2}{g_N}\d V_N.
\end{align}
Combining \eqref{eq term21} and \eqref{eq term22} yields
\begin{align}\label{ineq term2}
\notag\textbf{Term}_{i\neq k}=\&-\frac{2}{N}\sum_{i\neq k}\bigg(\int_{\R^{3N}}\frac{\bar{M}^{ik}:\bar{M}^{ik}}{g_N}\d V_N-2\int_{\R^{3N}}\frac{(\bar{\eta}^{i}\otimes\bar{\xi}^{k}):\bar{M}^{ik}}{g_N}\d V_N+\int_{\R^{3N}}\frac{|\bar{\eta}^{i}|^2|\bar{\xi}^{k}|^2}{g_N}\d V_N\bigg)\\  =\&-\frac{2}{N}\sum_{i\neq k}\bigg(\int_{\R^{3N}}\frac{|\bar{M}^{ik}-\bar{\eta}^{i}\otimes\bar{\xi}^{k}|^2}{g_N}\d V_N\bigg)\leq 0.
\end{align}
The second statement in the lemma holds by \eqref{ineq Term1} and \eqref{ineq term2} such as
$$
\begin{aligned}
\<\left(\I_N(g_N)\right)',\Delta g_N)\>=\textbf{Term}_{i=k}+\textbf{Term}_{i\neq k}\leq 0,
\end{aligned}
$$
which implies the desired decay of the Fisher information.
\end{proof}

Following the same strategy for the heat operator, we now prove the key ingredient in our work.

\begin{lemma}\label{lemma Liouviell flow} 
Assume that the potential $a(z)=a(|z|)$ satisfies 
\begin{equation}\label{ineq bound of a}
\frac{|z||a'(|z|)|}{a(|z|)}\leq\sqrt{22}.
\end{equation}
For any positive smooth symmetric function $g_N\in C^1\left([0,T],C^\infty(\R^{3N})\right)$ with rapid decay at infinity, the entropy and Fisher information of $g_N$ is decreasing along the flow given by the operator \begin{equation}\label{eq operator Q}Q(g_N)=\frac{1}{2N}\sum_{i\neq j}\div_{v^i-v^j}\left[a(v^i-v^j)\Pi(v^i-v^j)\nabla_{v^i-v^j}g_{N}\right],\end{equation}  namely   
$$
\<\left(\H_N(g_N)\right)',Q(g_N)\>\leq 0,    
$$
and
$$
\<\left(\I_N(g_N)\right)',Q(g_N)\>\leq 0.
$$ 
\end{lemma}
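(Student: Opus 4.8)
The plan is to follow, essentially verbatim, the structure of the proof of Lemma~\ref{lemma heat flow}, replacing the heat operator by $Q$ and the plain gradients $\nabla_{v^i}$ by the relative gradients $\nabla_{v^i-v^j}$, and then quantifying the extra contributions produced by the fact that $A(z)=a(z)\Pi(z)$ is neither constant nor uniformly elliptic. Monotonicity of the entropy is immediate: expanding $\<(\H_N(g_N))',Q(g_N)\>$ and integrating by parts pair by pair gives
$$
\<(\H_N(g_N))',Q(g_N)\>=-\frac{1}{2N}\sum_{i\neq j}\int_{\R^{3N}}\frac{1}{g_N}\<A(v^i-v^j)\nabla_{v^i-v^j}g_N,\nabla_{v^i-v^j}g_N\>\d V_N\leq0,
$$
since $a\geq0$ and $\Pi(z)\succeq0$, with no use of \eqref{ineq bound of a}.

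For the Fisher information, write $Q=\sum_{i\ne j}Q_{ij}$ with $Q_{ij}(g_N):=\frac{1}{2N}\div_{v^i-v^j}\!\left[A(v^i-v^j)\nabla_{v^i-v^j}g_N\right]$ and decompose $\I_N=\sum_{k}\I_N^k$ as in \eqref{def Ik}. Since the Gateaux derivative is linear in its direction, $\<(\I_N(g_N))',Q(g_N)\>=\sum_{i\ne j}\sum_{k}\<(\I_N^k(g_N))',Q_{ij}(g_N)\>$, so it suffices to estimate $\<(\I_N^k)',Q_{ij}\>$ for each pair $(i,j)$ and each $k$. Fixing $(i,j)$ it is convenient to pass to the variables $w=v^i-v^j$ and $s=v^i+v^j$: then $A(v^i-v^j)=A(w)$ depends only on $w$, $\nabla_{v^i-v^j}=2\nabla_w$, the operator $Q_{ij}$ becomes $\frac{2}{N}L_wg_N$ with $L_w:=\div_w\!\left(A(w)\nabla_w\,\cdot\,\right)$ the one-particle Landau diffusion in the relative variable, and $|\nabla_{v^i}g_N|^2+|\nabla_{v^j}g_N|^2=2|\nabla_wg_N|^2+2|\nabla_sg_N|^2$ (the cross terms cancelling). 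I then split the contributions into a \emph{transverse} part ($k\notin\{i,j\}$, together with the $\nabla_s$-contribution of $k\in\{i,j\}$) and a \emph{diagonal} part (the $\nabla_w$-contribution of $k\in\{i,j\}$).

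In the transverse part the relevant gradient ($\nabla_{v^k}$ or $\nabla_s$) commutes with $L_w$, because the coefficients of $L_w$ depend only on $w$; writing $u$ for that gradient of $g_N$, the very integration by parts of the heat case (cf.\ \eqref{eq term21}--\eqref{ineq term2}) then yields, up to a positive constant $c$,
$$
-c\sum_{\gamma}\int_{\R^{3N}}\frac{1}{g_N}\Big\langle A(w)\big(\nabla_w u_\gamma-\tfrac{u_\gamma}{g_N}\nabla_wg_N\big),\ \nabla_w u_\gamma-\tfrac{u_\gamma}{g_N}\nabla_wg_N\Big\rangle\d V_N\leq0
$$
for every symmetric $A\succeq0$, and no smallness of $a$ is used. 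The diagonal part, after the change of variables, is a positive multiple of $\frac{\d}{\d\tau}\big|_{\tau=0}\int\frac{|\nabla_wg_N|^2}{g_N}$ along $\partial_\tau g_N=L_wg_N$, integrated over the remaining $3(N-1)$ variables; that is, it is exactly the time derivative of the one-particle Fisher information along the space-homogeneous Landau flow, whose monotonicity is the theorem of Guillen-Silvestre \cite{guillen2025landau}. Following the heat-flow blueprint with $h=\log g_N$ and $L_wg_N/g_N=\<A\nabla_wh,\nabla_wh\>+B\cdot\nabla_wh+A:\nabla_w^2h$, repeated integration by parts together with a Bochner-type identity for $L_w$ produces a good negative Hessian term of the form $-c\sum_{\gamma}\int g_N\<A(w)\nabla_w\partial_{w_\gamma}h,\nabla_w\partial_{w_\gamma}h\>$, while the remaining error terms pair $\nabla_wA$ or $\nabla_w^2A$ with $\nabla_wh$ and $\nabla_w^2h$. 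Using that $a=a(|z|)$ is radial and the explicit form of $\Pi$ and its first two derivatives, these error terms are bounded pointwise by $a(|z|)$ times polynomials in the logarithmic derivative $|z||a'(|z|)|/a(|z|)$; splitting the $\nabla_wh\cdot\nabla_w^2h$ cross-terms by Young's inequality and absorbing them into the Hessian term leaves a quadratic form that is nonnegative precisely when $|z||a'(|z|)|/a(|z|)\leq\sqrt{22}$, i.e.\ under assumption \eqref{ineq bound of a} (comfortably met by the Coulomb potential, for which $|z||a'|/a\equiv1$).

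I expect the diagonal part to be the main obstacle: deriving the correct Bochner/curvature-type identity for the degenerate operator $\div_w(A(w)\nabla_w\,\cdot\,)$ and tracking, through several integrations by parts, all the terms created by $\nabla_wA$ and $\nabla_w^2A$ so that they are dominated by the single good negative term under only the bound on $|z a'(|z|)/a(|z|)|$. The degeneracy of $\Pi$ is a genuine complication: the completed squares and Hessians only see directions orthogonal to $z$, which is consistent because $L_w$ itself acts only in those directions, but it forces one to carry the projectors $\Pi$ throughout and precludes simply inverting $A$. A secondary, purely technical point is the rigorous justification of the Gateaux-differentiation identities \eqref{def diff fisher1}--\eqref{def diff fisher2} and of the integrations by parts in the presence of the (locally integrable in $\R^3$) singularity of $a$ at the origin; for the smooth, rapidly decaying $g_N$ of the statement this is routine, and the genuinely singular situation is treated via the regularisation introduced right after this lemma.
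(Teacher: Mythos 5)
Your entropy argument and your treatment of the ``transverse'' terms (the indices $k\notin\{i,j\}$, and the centre-of-mass direction after the change of variables) are sound and essentially coincide with the paper's computation for $\textbf{Term}_{k\neq i\neq j}$: since the coefficients of $Q_{ij}$ do not depend on those variables, the relevant gradient commutes with the operator and the heat-flow completion of squares goes through for any symmetric $A\succeq 0$, with no use of \eqref{ineq bound of a}. The gap is in your ``diagonal'' part, which is exactly where the hypothesis \eqref{ineq bound of a} must enter. First, your identification is wrong: after freezing the other variables and passing to $w=v^i-v^j$, that contribution is the Fisher information in the relative variable along the \emph{linear, degenerate} diffusion $\p_\tau u=\div_w\big(a(w)\Pi(w)\nabla_w u\big)$, not ``the time derivative of the one-particle Fisher information along the space-homogeneous Landau flow''; the Guillen--Silvestre theorem on the (nonlinear, nonlocal) Landau equation does not apply to it as cited. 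What the paper actually uses is their two-particle result (quoted here as Lemma \ref{lemma deacy of Fisher I2}, Proposition 10.2 of \cite{guillen2025landau}): for a \emph{symmetric} function $G(v,w)=G(w,v)$, the full six-dimensional Fisher information $\I_2$ decreases along $\div_{v-w}[a\Pi\nabla_{v-w}G]$ under \eqref{ineq bound of a}. The paper keeps $\I_N^i+\I_N^j$ together precisely so that, for each pair $(i,j)$ and each frozen $\hat V_N^{i,j}$, the exchangeability \eqref{symmetry} of $g_N$ in $(v^i,v^j)$ makes this lemma applicable pointwise, after which one integrates over $\hat V_N^{i,j}$.

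Second, when you instead propose to prove the diagonal part by hand (``Bochner-type identity, Young's inequality, absorb into the Hessian term, nonnegative precisely when $|z||a'|/a\le\sqrt{22}$''), you are asserting, without proof, essentially the hardest estimate of \cite{guillen2025landau} (whose threshold $\sqrt{19}$, improved to $\sqrt{22}$ in \cite{ji2024bounds}, comes from a delicate pointwise matrix/polynomial positivity argument, not from a simple absorption); moreover your sketch nowhere uses the symmetry of $g_N$ in $v^i\leftrightarrow v^j$ (evenness in $w$), which is an explicit hypothesis of Proposition 10.2 and is used in its proof, so the per-variable statement you claim is not available in the generality you state it. Splitting further into the $w$- and $s$-Fisher informations also commits you to a statement stronger than the one the literature provides (only the sum is controlled there), unless you explicitly reduce back to the symmetric two-particle setting. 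The fix is the paper's route: do not split the pair Fisher information, invoke Lemma \ref{lemma deacy of Fisher I2} for each $(i,j)$ with the remaining variables frozen (using the symmetry of $g_N$), and reserve the explicit heat-type computation for the indices $k\neq i\neq j$.
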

To prove the lemma above, we will take advantage of the following spectacular estimate given by \cite{guillen2025landau}. 
\begin{lemma}[{\cite[Proposition 10.2]{guillen2025landau}}]\label{lemma deacy of Fisher I2}
For any smooth positive function $G:\R^6\to(0,\infty)$ with rapid decay at infinity satisfying $G(v,w)=G(w,v)$, if the potential $a(z)=a(|z|)$ satisfies \eqref{ineq bound of a}, the Fisher information $\I_2(G)$ is decreasing along the flow given by  $\div_{v-w}\big[a(v-w)\Pi(v-w)\nabla_{v-w}G\big]$, namely,
$$
  \<(\I_2(G))',\div_{v-w}\big[a(v-w)\Pi(v-w)\nabla_{v-w}G\big]\>\leq 0. 
$$
\end{lemma}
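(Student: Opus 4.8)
The plan is to reduce the $N$-particle statement to the two-particle estimate of Lemma \ref{lemma deacy of Fisher I2} by exploiting two structural facts: the renormalised Fisher information splits as a sum of one-particle contributions $\I_N(g_N)=\sum_{k=1}^N\I_N^k(g_N)$ as in \eqref{def Ik}, and the operator in \eqref{eq operator Q} splits as $Q=\frac{1}{2N}\sum_{i\neq j}Q_{ij}$ with $Q_{ij}(g_N):=\div_{v^i-v^j}\big[a(v^i-v^j)\Pi(v^i-v^j)\nabla_{v^i-v^j}g_N\big]$ acting only on the pair $(v^i,v^j)$. By bilinearity of the Gateaux pairing,
$$
\<\left(\I_N(g_N)\right)',Q(g_N)\>=\frac{1}{2N}\sum_{i\neq j}\sum_{k=1}^N\<\left(\I_N^k(g_N)\right)',Q_{ij}(g_N)\>,
$$
so it suffices, for each ordered pair $i\neq j$, to show: (a) the ``far'' terms with $k\notin\{i,j\}$ are individually non-positive; and (b) the ``near'' terms satisfy $\<(\I_N^i)',Q_{ij}g_N\>+\<(\I_N^j)',Q_{ij}g_N\>\leq 0$. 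The entropy decay is the easy part: pairing $\log g_N$ with $Q(g_N)$ and integrating by parts in the difference variables gives $\<(\H_N(g_N))',Q(g_N)\>=-\frac{1}{2N^2}\sum_{i\neq j}\int_{\R^{3N}}g_N\,(\nabla_{v^i-v^j}\log g_N)^\top A(v^i-v^j)(\nabla_{v^i-v^j}\log g_N)\,\d V_N\leq 0$, using only that $A=a\Pi\geq 0$; no structural hypothesis on $a$ is needed here.

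For step (a) I would transcribe the heat-flow computation behind \eqref{ineq term2} in the proof of Lemma \ref{lemma heat flow}, with $\Delta_{v^i}$ replaced by $Q_{ij}$. Since $A(v^i-v^j)$ is independent of $v^k$ for $k\notin\{i,j\}$, the operator $\nabla_{v^k}$ commutes with $Q_{ij}$, and starting from formula \eqref{def diff fisher2} (localised to the index $k$) and integrating by parts in $v^i,v^j$ (the $\div_{v^i-v^j}$) and then in $v^k$, all terms collect into a single perfect square:
$$
\<(\I_N^k(g_N))',Q_{ij}(g_N)\>=-\frac{2}{N}\sum_{\gamma=1}^{3}\int_{\R^{3N}}\frac{1}{g_N}\Big(q_\gamma-\tfrac{s_\gamma}{g_N}\,r\Big)^\top A(v^i-v^j)\Big(q_\gamma-\tfrac{s_\gamma}{g_N}\,r\Big)\,\d V_N\leq 0,
$$
where $r:=\nabla_{v^i-v^j}g_N$, $s_\gamma:=\p_{v^k_\gamma}g_N$ and $q_\gamma:=\nabla_{v^i-v^j}\p_{v^k_\gamma}g_N$. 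This is exactly the analogue of the identity \eqref{ineq term2}, and non-positivity again uses only $A\geq 0$; one should just check that the $\div_{v^i-v^j}\,A\,\nabla_{v^i-v^j}$ structure, with $A$ genuinely depending on $v^i-v^j$, produces no extra terms, which holds because the $v^k$-derivatives never hit $A$.

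Step (b) is where Lemma \ref{lemma deacy of Fisher I2} enters, and it is the heart of the proof. Fix $i\neq j$ and freeze all the variables $(v^\ell)_{\ell\notin\{i,j\}}$; let $G(v^i,v^j):=g_N(V_N)$ denote the resulting function on $\R^6$. For each frozen configuration, $G$ is smooth, positive and rapidly decaying, and by the full exchangeability \eqref{symmetry} of $g_N$ one has $G(v^i,v^j)=G(v^j,v^i)$; moreover, the restriction of $Q_{ij}(g_N)$ to the frozen slice is precisely $\div_{v^i-v^j}\big[a(v^i-v^j)\Pi(v^i-v^j)\nabla_{v^i-v^j}G\big]$, the two-body Landau operator of Lemma \ref{lemma deacy of Fisher I2}. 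Hence, under the hypothesis \eqref{ineq bound of a}, that lemma yields $\<(\I_2(G))',\div_{v^i-v^j}[A(v^i-v^j)\nabla_{v^i-v^j}G]\>\leq 0$ pointwise in the frozen variables. Integrating this inequality over $(v^\ell)_{\ell\notin\{i,j\}}$, interchanging the Gateaux derivative with this integration (justified by smoothness and rapid decay), and using the identity
$$
\int_{\R^{3(N-2)}}\I_2(G)\prod_{\ell\notin\{i,j\}}\d v^\ell=\frac12\int_{\R^{3N}}\frac{|\nabla_{v^i}g_N|^2+|\nabla_{v^j}g_N|^2}{g_N}\,\d V_N=\frac{N}{2}\big(\I_N^i(g_N)+\I_N^j(g_N)\big),
$$
gives exactly $\<(\I_N^i)',Q_{ij}g_N\>+\<(\I_N^j)',Q_{ij}g_N\>\leq 0$. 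Summing (a) over $k\notin\{i,j\}$, adding (b), and then summing over all ordered pairs $i\neq j$ yields $\<(\I_N(g_N))',Q(g_N)\>\leq 0$.

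I expect the main obstacle to be the bookkeeping in step (b): verifying that the frozen slices genuinely inherit the hypotheses of Lemma \ref{lemma deacy of Fisher I2} — positivity, rapid decay uniformly enough to integrate, and especially the $v^i\leftrightarrow v^j$ symmetry, which is where full exchangeability \eqref{symmetry} (not merely pairwise symmetry) is used — together with correctly identifying the restricted operator and tracking the normalisation constants relating $\I_2(G)$ to the $\I_N^k$. The computation in step (a) is routine, being a direct transcription of the heat-flow argument with $A$ inserted.
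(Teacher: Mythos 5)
There is a genuine mismatch here: the statement you were asked to prove is the \emph{two-particle} monotonicity result, Lemma \ref{lemma deacy of Fisher I2}, which the paper does not prove at all but imports from Guillen--Silvestre \cite[Proposition 10.2]{guillen2025landau} (with the constant later improved to $\sqrt{22}$ in \cite{ji2024bounds}). Your proposal never proves this; instead it proves the $N$-particle statement, Lemma \ref{lemma Liouviell flow}, and in your step (b) you explicitly invoke Lemma \ref{lemma deacy of Fisher I2} as a black box. With respect to the target statement your argument is therefore circular: the one place where the hypothesis \eqref{ineq bound of a} would have to do real work is exactly the step you delegate to the lemma you are supposed to establish. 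Indeed, you observe yourself that the entropy computation and your step (a) use only $A=a\Pi\geq 0$; the condition $|z|\,|a'(|z|)|/a(|z|)\leq\sqrt{22}$ is never used in any computation you actually carry out, which is a clear sign the essential content is missing.

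The missing content is substantial: in the two-particle case the derivatives $\nabla_{v},\nabla_{w}$ \emph{do} hit $a(v-w)$ and $\Pi(v-w)$, so the perfect-square mechanism of your step (a) (which relies on the coefficients being independent of the differentiated variable) breaks down, and one must control genuinely indefinite commutator terms. Guillen--Silvestre handle this by a change of variables adapted to $v-w$, a lifting of the relevant quadratic form to the sphere, and a sharp Poincar\'e-type spectral estimate; it is precisely this functional inequality that produces the threshold constant in \eqref{ineq bound of a}. None of this appears in your proposal. As a side remark, what you did write is essentially the paper's own proof of Lemma \ref{lemma Liouviell flow} (the splitting into the $k\in\{i,j\}$ and $k\notin\{i,j\}$ contributions, the perfect square for the far indices, and the freezing of the remaining variables to apply the two-particle lemma slice-wise), so your reduction is sound --- it just proves a different statement than the one requested, and cannot count as a proof of Lemma \ref{lemma deacy of Fisher I2}.
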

Our Lemma \ref{lemma Liouviell flow} is valid for all power-law potentials $a(|z|)=|z|^{\gamma+2}$ in the range $\gamma\in[-3,1]$, covering the most physically relevant Coulomb case $\gamma=-3$. We also point out that the original constant in \eqref{ineq bound of a} was $\sqrt{19}$ in \cite{guillen2025landau}, which has been improved to $\sqrt{22}$ in \cite{ji2024bounds}. 
\begin{proof}[Proof of Lemma \ref{lemma Liouviell flow}] 
We evaluate the derivative of the entropy along the flow given by the operator \eqref{eq operator Q}
$$
\begin{aligned}
\Big\<\left(\H_N(g_N)\right)'\&,\frac{1}{2N}\sum_{i\neq j}\div_{v^i-v^j}\left[a(v^i-v^j)\Pi(v^i-v^j)\nabla_{v^i-v^j}g_{N}\right]\Big\>\\
=\&-\frac{1}{2N^2}\sum_{i\neq j}\int_{\R^{3N}}g_Na(v^i-v^j)\Pi(v^i-v^j):\nabla_{v^i-v^j}\log g_N\otimes  \nabla_{v^i-v^j}\log g_N\d V_N\leq 0,
\end{aligned}
$$
which is because the projection matrix $\Pi$ positive semi-definite.

In terms of the Fisher information, it holds 
$$
\begin{aligned}
\Big\<\Big(\I_N(g_N)\Big)'\&,\frac{1}{2N}\sum_{i\neq j}\div_{v^i-v^j}\left[a(v^i-v^j)\Pi(v^i-v^j)\nabla_{v^i-v^j}g_{N}\right]\Big\>\\=\&\frac{1}{2N}\sum_{i\neq j}\Big\<\Big(\I_N^i(g_N)\Big)'+\Big(\I_N^j(g_N)\Big)',\div_{v^i-v^j}\big[a(v^i-v^j)\Pi(v^i-v^j)\nabla_{v^i-v^j}g_N\big]\Big\>\\\&+\frac{1}{2N}\sum_{k\neq i\neq j}\Big\<\left(\I_N^k(g_N)\right)',\div_{v^i-v^j}\big[a(v^i-v^j)\Pi(v^i-v^j)\nabla_{v^i-v^j}g_N\big]\Big\>\\=\&\textbf{Term}_{k=i,j}+\textbf{Term}_{k\neq i\neq j},
\end{aligned}
$$
where we used the notation \eqref{def Ik}, and the convention $k\neq i\neq j$ denotes that $i,j,k$ are mutually different.

We will analyse $\textbf{Term}_{k=i,j}$ and $\textbf{Term}_{k\neq i\neq j}$ separately. We rewrite the first term into the derivative with respect to $(v^i,v^j)\in\R^6$,
\begin{equation}\label{eq Term k=ij}
\textbf{Term}_{k=i,j}=\frac{1}{2N^2}\sum_{i\neq j}\Big\<\big(\int_{\R^{3N}}\frac{|\nabla_{(v^i,v^j)} g_N|^2}{g_N} \d V_N\big)',\div_{v^i-v^j}\big[a(v^i-v^j)\Pi(v^i-v^j)\nabla_{v^i-v^j}g_N\big]\Big\>.
\end{equation}
For any fixed other variables $\hat{V}^{i,j}_N:=(v^1,\ldots,v^{i-1},v^{i+1},\ldots,v^{j-1},v^{j+1},\ldots,v^N)$, $g_N(\cdot,\cdot,\hat{V}^{i,j}_N):\R^6\to(0,\infty)$ is a smooth positive symmetric function with rapid decay at infinity. We can apply Lemma \ref{lemma deacy of Fisher I2} directly by letting variables $(v,w)=(v^i,v^j)$. For any $i\neq j$, the Fisher information $\I_2$ is decreasing along the direction $\div_{v^i-v^j}\big[a(v^i-v^j)\Pi(v^i-v^j)\nabla_{v^i-v^j}g_N\big]$, which holds pointwisely for every $\hat{V}^{i,j}_N\in \R^{3N-6}$. That is to say,  it holds for all $\hat{V}^{i,j}_N\in\R^{3N-6}$ that,
$$
\begin{aligned}
\big\<\&\left(\I_2\big(g_N(v^i,v^j,\hat{V}^{i,j}_N)\big)\right)',\div_{v^i-v^j}\big[a(v^i-v^j)\Pi(v^i-v^j)\nabla_{v^i-v^j}g_N(v^i,v^j,\hat{V}^{i,j}_N)\big]\big\>\\=\&\Big\<\Big(\frac{1}{2}\int_{\R^{6}}\frac{|\nabla_{(v^i,v^j)} g_N|^2}{g_N} \d v^i\d v^j\Big)',\div_{v^i-v^j}\big[a(v^i-v^j)\Pi(v^i-v^j)\nabla_{v^i-v^j}g_N\big]\Big\>(\hat{V}^{i,j}_N)\leq 0,
\end{aligned}
$$
which implies that the nonpositivity is also valid after integrating over $\hat{V}^{i,j}_N$ such as
$$
\textbf{Term}_{k=i,j}=\frac{1}{N^2}\sum_{i\neq j}\int_{\R^{3N-6}}\big\<\big(\I_2(g_N)\big)',\div_{v^i-v^j}\big[a(v^i-v^j)\Pi(v^i-v^j)\nabla_{v^i-v^j}g_N\big]\big\>\d \hat{V}^{i,j}_N\leq 0.
$$

\medskip

Now we focus on the index set $i\neq j\neq k$ where they are mutually different. The proof is somehow in the same spirit of the proof of Lemma \ref{lemma heat flow}. Let matrix $M^{ijk}$ be
\begin{equation}\label{def martix}
M^{ijk}:=\nabla_{v^i-v^j}\nabla_{v^k}g_N=\big(\p_{v^i_\alpha-v^j_\alpha}\p_{v^k_\gamma} g_N\big)_{\alpha,\gamma=1,2,3},
\end{equation} 
and vectors $\eta^{ij}$, $\xi^{k}$ be 
\begin{equation}\label{def vector} \eta^{ij}:=\nabla_{v^i-v^j}g_N=\big(\p_{v^i_\alpha-v^j_\alpha} g_N\big)_{\alpha=1,2,3},\quad \xi^{k}:=\frac{\nabla_{v^k}g_N}{g_N}=\left(\frac{\p_{v^k_\gamma} g_N}{g_N}\right)_{\gamma=1,2,3},
\end{equation} 
where $\p_{v^i_\alpha-v^j_\alpha} g_N=\p_{v^i_\alpha}g_N-\p_{v^j_\alpha}g_N$.
Note that $M^{ijk}$ and $\Pi$ are both symmetric matrices, i.e, $M^{ijk}_{\alpha\beta}=M^{ijk}_{\beta\alpha}$  and $\Pi_{\alpha\beta}=\Pi_{\beta\alpha}$. 

We write $\textbf{Term}_{k\neq i\neq j}$
explicitly by \eqref{def diff fisher2} such as
\begin{align}\label{eq Term kij}
\textbf{Term}_{k\neq i\neq j}=\&\frac{1}{2N^2}\sum_{i\neq j\neq k}\Big\<\left(\int_{\R^{3N}}\frac{|\nabla_{v^k} g_N|^2}{g_N} \d V_N\right)',\div_{v^i-v^j}\big[a(v^i-v^j)\Pi(v^i-v^j)\nabla_{v^i-v^j}g_N\big]\Big\>\notag\\=\&\frac{1}{N^2}\sum_{i\neq j\neq k}\int_{\R^{3N}}\frac{\nabla_{v^k}g_N}{g_N}\cdot \nabla_{v^k}\Big(\div_{v^i-v^j}\big[a(v^i-v^j)\Pi(v^i-v^j)\nabla_{v^i-v^j}g_N\big]\Big)\d V_N\notag\\
\&-\frac{1}{2N^2}\sum_{i\neq j\neq k}\int_{\R^{3N}}\frac{|\nabla_{v^k}g_N|^2}{g_N^2}\div_{v^i-v^j}\big[a(v^i-v^j)\Pi(v^i-v^j)\nabla_{v^i-v^j}g_N\big]\d V_N.
\end{align}
Then, the first term on the right-hand side of \eqref{eq Term kij} has the following identity by writing into component-wise form, for $\alpha,\beta,\gamma=1,2,3$, 
$$
\begin{aligned}
 \int_{\R^{3N}}\frac{\nabla_{v^k}g_N}{g_N}\&\cdot \nabla_{v^k}\Big(\div_{v^i-v^j}\big[a(v^i-v^j)\Pi(v^i-v^j)\nabla_{v^i-v^j}g_N\big]\Big)\d V_N\\
=\& \sum_{\gamma=1}^{3}\int_{\R^{3N}}\left(\frac{\p_{v^k_\gamma}g_N}{g_N}\right)\p_{v^k_\gamma}\Big(\sum_{\alpha=1}^{3}\p_{v^i_\alpha-v^j_\alpha}\big(\sum_{\beta=1}^{3} a(v^i-v^j)\Pi_{\alpha\beta}(v^i-v^j) \p_{v^i_\beta-v^j_\beta}g_N\big)\Big)\d V_N\\=\&-\sum_{\alpha,\beta,\gamma}\int_{\R^{3N}}\p_{v^i_\alpha-v^j_\alpha}\left(\frac{\p_{v^k_\gamma}g_N}{g_N}\right)\p_{v^k_\gamma}\Big( a(v^i-v^j)\Pi_{\alpha\beta}(v^i-v^j) \p_{v^i_\beta-v^j_\beta}g_N\Big)\d V_N,
\end{aligned}
$$
where the last step thanks to swapping two derivatives $\p_{v^k_\gamma}$ and $\p_{v^i_\alpha-v^j_\alpha}$ and then integration by parts. Notice that the cancellation holds 
$$
\p_{v^k_\gamma} \Big( a(v^i-v^j)\Pi_{\alpha\beta}(v^i-v^j)\Big)=0,
$$ 
when the index are mutually different $i\neq j\neq k$. Recall the notations \eqref{def martix} and \eqref{def vector}, we have the identity
\begin{equation}\label{eq J21}
\begin{aligned}
 \int_{\R^{3N}}\frac{\nabla_{v^k}g_N}{g_N}\&\cdot \nabla_{v^k}\Big(\div_{v^i-v^j}\big[a(v^i-v^j)\Pi(v^i-v^j)\nabla_{v^i-v^j}g_N\big]\Big)\d V_N\\
=\& -\sum_{\alpha,\beta,\gamma}\int_{\R^{3N}}\p_{v^i_\alpha-v^j_\alpha} \left(\frac{\p_{v^k_\gamma}g_N}{g_N}\right)  a(v^i-v^j)\Pi_{\alpha\beta}(v^i-v^j)\,\p_{v^k_\gamma} \p_{v^i_\beta-v^j_\beta}g_N\d V_N\\
=\& -\sum_{\alpha,\beta,\gamma}\int_{\R^{3N}} a(v^i-v^j)\frac{\p_{v^i_\alpha-v^j_\alpha}\p_{v^k_\gamma}g_N}{g_N}  \Pi_{\alpha\beta}(v^i-v^j)\,\p_{v^k_\gamma} \p_{v^i_\beta-v^j_\beta}g_N\d V_N\\\& +\sum_{\alpha,\beta,\gamma}\int_{\R^{3N}}a(v^i-v^j) \frac{\p_{v^i_\alpha-v^j_\alpha}g_N\p_{v^k_\gamma}g_N}{g_N^2}  \Pi_{\alpha\beta}(v^i-v^j)\,\p_{v^k_\gamma} \p_{v^i_\beta-v^j_\beta}g_N\d V_N\\=\&-\int_{\R^{3N}}a(v^i-v^j)\frac{M^{ijk}:\Pi M^{ijk}}{g_N}\d V_N+\int_{\R^{3N}}a(v^i-v^j)\frac{(\eta^{ij}\otimes\xi^{k}):\Pi M^{ijk}}{g_N}\d V_N.
\end{aligned}
\end{equation}
The second term on the right-hand side of \eqref{eq Term kij} satisfies the following identity with $i\neq j\neq k$ that 
\begin{align}\label{eq J22}
\notag\frac{1}{2}\int_{\R^{3N}}\frac{|\nabla_{v^k}g_N|^2}{g_N^2}\&\div_{v^i-v^j}\big[a(v^i-v^j)\Pi(v^i-v^j)\nabla_{v^i-v^j}g_N\big]\d V_N\\\notag= \&-\frac{1}{2}\int_{\R^{3N}}\nabla_{v^i-v^j}\frac{|\nabla_{v^k}g_N|^2}{g_N^2}\cdot\big[a(v^i-v^j)\Pi(v^i-v^j)\nabla_{v^i-v^j}g_N\big]\d V_N
\\=\&-\sum_{\alpha,\beta,\gamma}\int_{\R^{3N}}a(v^i-v^j)\frac{\p_{v^k_\gamma}g_N}{g_N}\frac{\p_{v^i_\alpha-v^j_\alpha}\p_{v^k_\gamma}g_N}{g_N}  \Pi_{\alpha\beta}(v^i-v^j)\p_{v^i_\beta-v^j_\beta}g_N\d V_N\\\notag\&+\sum_{\alpha,\beta,\gamma}\int_{\R^{3N}}a(v^i-v^j)\frac{(\p_{v^k_\gamma}g_N)^2\p_{v^i_\alpha-v^j_\alpha}g_N}{g_N^3}\Pi_{\alpha\beta}(v^i-v^j)\p_{v^i_\beta-v^j_\beta}g_N\d V_N\\\notag=\&-\int_{\R^{3N}}a(v^i-v^j)\frac{(\eta^{ij}\otimes\xi^{k}):\Pi M^{ijk}}{g_N}\d V_N+\int_{\R^{3N}}a(v^i-v^j)\frac{|\xi^k|^2\eta^{ij}\cdot(\Pi \eta^{ij})}{g_N}\d V_N.
\end{align}
Combining \eqref{eq J21} and \eqref{eq J22}, we observe that
$$
\begin{aligned}
\textbf{Term}_{k\neq i\neq j}=\&-\frac{1}{N^2}\sum_{i\neq j\neq k}\bigg(\int_{\R^{3N}}a(v^i-v^j)\frac{M^{ijk}:\Pi M^{ijk}}{g_N}\d V_N\\\&\qquad-2\int_{\R^{3N}}a(v^i-v^j)\frac{(\eta^{ij}\otimes\xi^{k}):\Pi M^{ijk}}{g_N}\d V_N+\int_{\R^{3N}}a(v^i-v^j)\frac{|\xi^{k}|^2\eta^{ij}\cdot\Pi \eta^{ij}}{g_N}\d V_N\bigg).
\end{aligned}
$$
Now we ignore the superscripts for a while, for the idempotent and symmetric projection matrix $\Pi$ and the symmetric matrix $M$, it holds
$$
\begin{aligned}
M:\Pi M=\&\sum_{\beta,\gamma,\delta}M_{\gamma\beta}\Pi_{\gamma\delta}M_{\beta\delta}=\sum_{\beta,\gamma,\delta}M_{\beta\gamma}\left(\sum_\alpha\Pi_{\alpha\gamma}\Pi_{\alpha\delta}\right)M_{\beta\delta}\\=\&\sum_{\alpha,\beta}\left(\sum_\gamma\Pi_{\alpha\gamma}M_{\beta\gamma}\right)\left(\sum_\delta\Pi_{\alpha\delta}M_{\beta\delta}\right)=\sum_{\alpha,\beta,\gamma,\delta}\Pi_{\alpha\gamma}M_{\beta\gamma}\Pi_{\alpha\delta}M_{\beta\delta}=\Pi M:\Pi M,
\end{aligned}
$$
and for any vectors $\eta$ and $\xi$ in $\R^3$, it holds
$$
\begin{aligned}
(\eta\otimes \xi):\Pi M=\&\sum_{\alpha,\beta,\gamma}\eta_\alpha \xi_\gamma \Pi_{\alpha\beta}M_{\gamma\beta}=\sum_{\alpha,\beta,\gamma}\eta_\alpha \xi_\gamma \left(\sum_\delta\Pi_{\alpha\delta}\Pi_{\beta\delta}\right)M_{\gamma\beta}\\=\&\sum_{\beta,\gamma,\delta}\left(\sum_\alpha\eta_\alpha \Pi_{\alpha\delta}\right) \xi_\gamma \left(\sum_\beta\Pi_{\delta\beta}M_{\gamma\beta}\right)=(\Pi \eta\otimes \xi):\Pi M,
\end{aligned}
$$
and 
$$
|\xi|^2\eta\cdot(\Pi \eta)=|\xi|^2(\Pi \eta)\cdot (\Pi \eta)=|\xi|^2|\Pi \eta|^2=(\Pi \eta\otimes \xi):(\Pi \eta\otimes \xi).
$$
Essentially, $\textbf{Term}_{k\neq i\neq j}$ can be simplified as
$$
\begin{aligned}
\textbf{Term}_{k\neq i\neq j}=\&-\frac{1}{N^2}\sum_{i\neq j\neq k}\int_{\R^{3N}}\frac{a}{g_N}\left(M:\Pi M-2(\eta\otimes \xi ):\Pi M+|\xi|^2\eta\cdot\Pi \eta\right)\d V_N \\ =\&-\frac{1}{N^2}\sum_{i\neq j\neq k}\int_{\R^{3N}}\frac{a}{g_N}\left(\Pi M:\Pi M-2(\Pi\eta\otimes \xi ):\Pi M+(\Pi \eta\otimes \xi):(\Pi \eta\otimes \xi)\right)\d V_N\\ =\&-\frac{1}{N^2}\sum_{i\neq j\neq k}\int_{\R^{3N}}\frac{a(v^i-v^j)}{g_N}\left|\Pi M^{ijk}-(\Pi \eta^{ij}\otimes \xi^{k})\right|^2\d V_N\leq 0, 
\end{aligned}
$$
where we used the Frobenius norm in the last line. Therefore, we get
$$
\begin{aligned}
\Big\<\left(\I_N(g_N)\right)',\frac{1}{2N}\sum_{i\neq j}\div_{v^i-v^j}\left[a(v^i-v^j)\Pi(v^i-v^j)\nabla_{v^i-v^j}g_{N}\right]\Big\>=\textbf{Term}_{k=i,j}+\textbf{Term}_{k\neq i\neq j}\leq 0,
\end{aligned}
$$
and conclude the second statement of the lemma.
\end{proof}

To rigorously prove Proposition \ref{prop well-posedness of fN} for solutions of the Liouville equation \eqref{eq fN}, we will apply a smooth cut-off near the origin and adding a vanishing linear diffusion. We introduce $f^\eps_N$ which satisfies
\begin{equation}\label{eq fNeps}
\begin{cases}
\displaystyle\p_t f_N^\eps=\frac{1}{2N}\sum_{i\neq j}\div_{v^i-v^j}\left[a^\eps(v^i-v^j)\Pi(v^i-v^j)\nabla_{v^i-v^j}f^\eps_{N}\right]+\eps \sum_{i=1}^N \Delta_{v^i}f_N^\eps,  \\ \\
      f_N^\eps(0)=(f^{0,\eps})^{\otimes N},  
\end{cases}
\end{equation}
where $a^\eps(z)=\chi(|z|/\eps)/|z|$ with $\chi\in C^\infty\big([0,\infty)\big)$, $\chi(r)=1$ when $r\geq 2$,  $\chi(r)=0$ when $r\leq 1$, and $f^{0,\eps}$ is positive and smooth with finite mass, energy, entropy and Fisher information.

By the standard theory of the linear parabolic equation, there exists a unique classical solution $f_N^\eps\in C^1\left([0,T],C^\infty(\R^{3N})\right) $ of \eqref{eq fNeps} which well-behaves at infinity. To be more precise, if the initial data can be bounded from above and below by Gaussians such as for fixed $\eps>0$, $C_1\exp(-\beta|v|^2)\leq f^{0,\eps}(v)\leq C_2\exp(-\beta|v|^2)$, then the upper and lower Gaussian bounds can be propagated in finite time to $f^{\eps}_N(t,V_N)$; and both $|\nabla \log f_N^\eps|$, $|\nabla^2 \log f_N^\eps|$ grow much slower than $\exp(\beta|V_N|^2)$. So all the integrals above in the proof are well-defined, and integration by parts and manipulations in the proof of Lemma \ref{lemma deacy of Fisher I2} are justified.

Notice that we can crucially choose our cut-off function $\chi$ in \eqref{eq fNeps} to ensure $a^\eps(z)=\chi(|z|/\eps)/|z|$ satisfies the bound \eqref{ineq bound of a}. Also, using some initial data $f^0$ fulfils Assumption \ref{assumption}, one can construct $f^{0,\eps}$ by the function $\chi$ and some smooth mollifier $\eta^\eps\to\delta_0$ such as 
$$
f^{0,\eps}(z)=(f^0\ast \eta^\eps(z)+\eps)\left(1-\chi(\eps|z|)\right)+\chi(\eps|z|)\exp(-|z|^2),
$$
which satisfies Gaussian bounds for fixed $\eps$ and converges to $f^0$ as $\eps\to0$. And $f^{0,\eps}$ has uniform-in-$\eps$ mass, momentum, energy, entropy and Fisher information.

From Lemma \ref{lemma heat flow} and Lemma \ref{lemma Liouviell flow}, we deduce that the solution $f_N^\eps$ of the regularised Liouville equation \eqref{eq fNeps} satisfies
$$
\begin{aligned}
\&\Big\<\left(\H_N(f_N^\eps)\right)',\frac{1}{2N}\sum_{i\neq j}\div_{v^i-v^j}\left[a^\eps(v^i-v^j)\Pi(v^i-v^j)\nabla_{v^i-v^j}f^\eps_{N}\right]+\eps \sum_{i=1}^N \Delta_{v^i}f_N^\eps\Big\>\\
\leq\& \frac{1}{2N}\sum_{i\neq j}\Big\<\left(\H_N(f_N^\eps)\right)',\div_{v^i-v^j}\left[a^\eps(v^i-v^j)\Pi(v^i-v^j)\nabla_{v^i-v^j}f^\eps_{N}\right]\Big\>\\ \&+ \eps \sum_{i=1}^N \Big\<\left(\H_N(f_N^\eps)\right)',\Delta_{v^i}f_N^\eps\Big\>\leq  0,
\end{aligned}
$$
and
$$
\begin{aligned}
\&\Big\<\left(\I_N(f_N^\eps)\right)',\frac{1}{2N}\sum_{i\neq j}\div_{v^i-v^j}\left[a^\eps(v^i-v^j)\Pi(v^i-v^j)\nabla_{v^i-v^j}f^\eps_{N}\right]+\eps \sum_{i=1}^N \Delta_{v^i}f_N^\eps\Big\>\\
\leq\& \frac{1}{2N}\sum_{i\neq j}\Big\<\left(\I_N(f_N^\eps)\right)',\div_{v^i-v^j}\left[a^\eps(v^i-v^j)\Pi(v^i-v^j)\nabla_{v^i-v^j}f^\eps_{N}\right]\Big\>\\ \&+ \eps \sum_{i=1}^N \Big\<\left(\I_N(f_N^\eps)\right)',\Delta_{v^i}f_N^\eps\Big\>\leq  0,
\end{aligned}
$$
which implies that, for  $\eps>0$ and any $t_2\geq t_1\geq 0$, it holds 
\begin{equation}\label{ineq eps entropy}
\begin{aligned}
\&\int_{\R^{3N}}f_N^\eps(t_2)\log f_N^\eps(t_2)\d V_N\leq \int_{\R^{3N}}f_N^\eps(t_1)\log f_N^\eps(t_1)\d V_N\\\leq\& \int_{\R^{3N}}f_N^\eps(0)\log f_N^\eps(0)\d V_N= N\int_{\R^{3}}f^{0,\eps}\log f^{0,\eps}\d v^1\leq C_1N,      
\end{aligned}
\end{equation}
and
\begin{equation}\label{ineq eps fisher}
\begin{aligned}
\&
\int_{\R^{3N}}\frac{|\nabla f_N^\eps(t_2)|^2}{f_N^\eps(t_2)}\d V_N\leq \int_{\R^{3N}}\frac{|\nabla f_N^\eps(t_1)|^2}{f_N^\eps(t_1)}\d V_N\\\leq\& \int_{\R^{3N}}\frac{|\nabla f_N^\eps(0)|^2}{f_N^\eps(0)}\d V_N=  N\int_{\R^{3}}\frac{|\nabla_{v^1} f^{0,\eps}|^2}{ f^{0,\eps}}\d v^1\leq C_2N.
\end{aligned}
\end{equation}
Also, we can check that the evolution of the second moment satisfies 
$$
\frac{\d}{\d t}\int_{\R^{3N}}f_N^\eps|V_N|^2\d V_N=\eps\sum_{i=1}^N\int_{\R^{3N}}\Delta_{v^i}f_N^\eps|V_N|^2\d V_N=6N\eps,
$$
then
\begin{equation}\label{eq uniform 2nd}
\int_{\R^{3N}}f_N^\eps(t)|V_N|^2\d V_N=N\int_{\R^{3}}f^{0,\eps}(v^1)|v^1|^2\d v^1+6N\eps t.   
\end{equation}
For any $T>0$, since the entropy bound \eqref{ineq eps entropy} and bound of the second moment \eqref{eq uniform 2nd} are uniform-in-$\eps$, by Dunford-Pettis criterion and a
diagonal extraction argument, there exist a function $f_N\in L^\infty\left(0,T;L^1_2(\R^{3N})\right)$ where $L^1_2$ is denoted by the $L^1$ function with finite second moment, for a dense set $\{\tau_k\}_{k\in\N}\subset[0,T]$,  up to some subsequence $\{\eps_\ell\}_{\ell\in\N}$ and $\eps_\ell\to 0$, 
\begin{equation}\label{conv eps dense weakly L1}
f_N^{\eps_\ell}(t)\rightharpoonup f_N(t)\quad\text{weakly in}\quad L^1(\R^{3N})\quad \forall t\in\{\tau_k\}_{k\in\N}.
\end{equation} 
Recall \eqref{ineq eps fisher}, we get the uniform-in-$\eps$ estimate
$$
\int_{\R^{3N}}\left|\nabla_{3N}\sqrt{f_{N}^\eps}\right|^2\d V_N<CN,
$$
namely $\sqrt{f_N^\eps}\in H^1(\R^{3N})$. 
For any bounded set 
$\B\subset \R^{3N}$ and the Sobolev compact embedding 
$H^1\left(\B\right)\hookrightarrow L^p\left(\B\right)$ with
$ p=6N/(3N-2)$, we are able to strengthen \eqref{conv eps dense weakly L1} to 
\begin{equation}\label{conv eps dense strongly Lp}
f_N^{\eps_\ell}(t)\to f_N(t)\quad\text{strongly in}\quad L^{3N/(3N-2)}_{loc}(\R^{3N})\quad \forall t\in\{\tau_k\}_{k\in\N}.
\end{equation}  
For any compactly supported test function $\psi_{N}\in W^{2,\infty}(\R^{3N})$ with its supported set $\supp\vphi_{N}\subset (\B_R)^N$, where $\B_R\subset\R^3$ is a closed ball with the radius $R>0$, and for $0\leq t_1\leq t_2\leq T$, we have  \begin{equation}\label{ineq equicountinuity}
\begin{aligned}
\bigg|\int_{\R^{3N}}\psi_{N}\& f_N^{\eps}(t_2)\d V_{N}-\int_{\R^{3N}}\psi_{N} f_N^{\eps}(t_1)\d V_{N}\bigg|\\=\&\bigg|\frac{1}{N}\sum_{i\neq j}^N\int_{t_1}^{t_2}\int_{\R^{3N}}A(v^i-v^j):(\nabla_{v^iv^i}^2\psi_{N}-\nabla_{v^iv^j}^2\psi_{N})f_N^\eps\d V_{N}\d t\\\&+\frac{1}{N}\sum_{i\neq j}^N\int_{t_1}^{t_2}\int_{\R^{3N}}B(v^i-v^j)\cdot (\nabla_{v^i}\psi_{N}-\nabla_{v^j}\psi_{N}) f_N^\eps\d V_{N}\d t\bigg|\\\leq\& N\int_{t_1}^{t_2}\left|\int_{\R^{3N}}A(v^1-v^2): (\nabla_{v^1v^1}^2\psi_{N}-\nabla_{v^1v^2}^2\psi_{N}) f_N^\eps\d V_{N} \right|\d t\\\&+ N\int_{t_1}^{t_2}\left|\int_{\R^{3N}}B(v^1-v^2)\cdot (\nabla_{v^1}\psi_{N}-\nabla_{v^2}\psi_{N}) f_N^\eps\d V_{N} \right|\d t.
\end{aligned}
\end{equation}
The integrand of the first term on the right-hand side of \eqref{ineq equicountinuity} holds
\begin{equation}\label{ineq fNeps A}
\begin{aligned}
\bigg|\int_{\R^{3N}}\& A(v^1-v^2): (\nabla_{v^1v^1}^2\psi_{N}-\nabla_{v^1v^2}^2\psi_{N}) f_N^\eps\d V_{N} \bigg|\\\leq\&\left(\left\|\nabla_{v^1v^1}^2\psi_N\right\|_{L^\infty(\R^{3N})}^2+\left\|\nabla_{v^1v^2}^2\psi_N\right\|_{L^\infty(\R^{3N})}^2\right)\int_{\supp\psi_N}\frac{1}{|v^1-v^2|} f_{N}^\eps\d V_{N} 
\\  \leq\&\left(\left\|\nabla_{v^1v^1}^2\psi_N\right\|_{L^\infty(\R^{3N})}^2+\left\|\nabla_{v^1v^2}^2\psi_N\right\|_{L^\infty(\R^{3N})}^2\right)\int_{\B_R\times \B_R}\frac{f_{N,2}^\eps(v^1,v^2)}{|v^1-v^2|} \d v^1\d v^2,  
\end{aligned}
\end{equation}
where $f_{N,2}^\eps$ is the second marginal of $f_N^\eps$. We denote 
\begin{equation}\label{exchanging}
V^{i,j}_N:=(v^1,\ldots,v^{i-1},v^j,v^{i+1},\ldots,v^{j-1},v^i,v^{j+1},\ldots,v^N)
\end{equation}
by exchanging $v^i$ and $v^j$ for $i<j$; thanks to the symmetry, the integrand of the second term on the right-hand side of \eqref{ineq equicountinuity} involving $B$ has the similar bound such as
\begin{align}\label{ineq fNeps B}
\notag\bigg|\int_{\R^{3N}}\&B(v^1-v^2)\cdot (\nabla_{v^1}\psi_{N}-\nabla_{v^2}\psi_{N}) f_{N}^\eps\d V_{N} \bigg|\\  \notag=\&\left|\int_{\R^{3N}}B(v^1-v^2)\cdot\left(\nabla_{v^1}\psi_{N}(V_N)-\nabla_{v^1}\psi_{N}(V_N^{1,2})\right) f_{N}^\eps\d V_{N} \right|\\\notag\leq\&2\int_{\R^{3N}}\left(\frac{\left|\nabla_{v^1}\psi_{N}(v^1,v^2)-\nabla_{v^1}\psi_{N}(v^2,v^2)\right|}{|v^1-v^2|}+\frac{\left|\nabla_{v^1}\psi_{N}(v^2,v^2)-\nabla_{v^1}\psi_{N}(v^2,v^1)\right|}{|v^1-v^2|}\right) \frac{f_{N}^\eps}{|v^1-v^2|}\d V_{N} 
\\  \leq\&2\left(\left\|\nabla_{v^1v^1}^2\psi_N\right\|_{L^\infty}^2+\left\|\nabla_{v^1v^2}^2\psi_N\right\|_{L^\infty}^2\right)\int_{\B_R\times \B_R}\frac{f_{N,2}^\eps(v^1,v^2)}{|v^1-v^2|} \d v^1\d v^2.
\end{align}
To control the common last integral in \eqref{ineq fNeps A} and \eqref{ineq fNeps B} with bounded integrating domain $\B_R\times\B_R$, we follow the technique used in \cite[Lemma 3.3]{fournier2014propagation} and manipulate changing of variables defined such as the linear transformation $\rho$
\begin{equation}\label{change of variables}
v^1,v^2\in\R^3,\quad \rho(v^1,v^2)=\frac{1}{\sqrt{2}}(v^1-v^2,v^1+v^2):=(w^1,w^2),
\end{equation}
with $|\det\rho|=|\det\rho^{-1}|=1$. The function obtained by changing of variables is denoted by $\tilde{G}$, namely
$$
\tilde{G}(w^1,w^2)=G\circ\rho^{-1}(w^1,w^2)=G(v^1,v^2).
$$
It is not hard to see $\tilde{G}$ and $G$ have the same Fisher information $\I_2(\tilde{G})=\I_2(G)$. For some bounded ball $\B_R$, it holds
\begin{equation}\label{ineq first marginal}
\begin{aligned}
\int_{\B_R\times \B_R}\&\frac{f_{N,2}^\eps(v^1,v^2)}{|v^1-v^2|} \d v^1\d v^2\leq\frac{1}{\sqrt{2}} \int_{\B_{2R}\times \B_{2R}}\frac{\tilde{f}_{N,2}^\eps(w^1,w^2)}{|w^1|} \d w^1\d w^2\leq \frac{1}{\sqrt{2}}\int_{\B_{2R}}\frac{\tilde{f}_{N,1}^\eps(w^1)}{|w^1|} \d w^1\\
\leq\& \frac{1}{\sqrt{2}}\left(\int_{\B_{2R}}\frac{1}{|w^1|^{3/2}}\d w^1\right)^{\frac{2}{3}}\left\|\tilde{f}_{N,1}^\eps\right\|_{L^3(\B_{2R})}\leq C\left\|\tilde{f}_{N,1}^\eps\right\|_{L^3(\B_{2R})},
\end{aligned}
\end{equation}
where $\tilde{f}_{N,1}$ is the first marginal of $\tilde{f}_{N,2}$ and the last inequality holds by the integrability. We notice that the compact embedding $H^1(\B_{2R})\hookrightarrow L^6(\B_{2R})$ holds for any $\B_{2R}\subset\R^3$ such as
$$
\left\|\tilde{f}_{N,1}^\eps\right\|_{L^3(\B_{2R})}=\left\|\sqrt{\tilde{f}_{N,1}^\eps}\right\|_{L^6(\B_{2R})}^2\leq C\left\|\nabla\sqrt{\tilde{f}_{N,1}^\eps}\right\|_{L^2(\B_{2R})}^2\leq \frac{C}{4}\I_1(\tilde{f}_{N,1}^\eps),
$$
where the estimate \eqref{ineq eps fisher} and Lemma \ref{lemma subadditivity} implies that for any $t\in[0,T]$
$$
\I_1(\tilde{f}_{N,1}^\eps(t))\leq \I_2(\tilde{f}_{N,2}^\eps(t))=\I_2(f_{N,2}^\eps(t))\leq \I_N(f_{N}^\eps(t))\leq \I_1(f^{0,\eps})<C.
$$
Therefore we get the estimate of  \eqref{ineq first marginal}, such as for any $\eps$
\begin{equation}\label{ineq double integral}
\sup_{t\in[0,T]}\int_{\B_R\times \B_R}\frac{f_{N,2}^\eps(t,v^1,v^2)}{|v^1-v^2|} \d v^1\d v^2\lesssim \sup_{t\in[0,T]}\I_N(f_N^\eps(t))\leq \I_1(f^{0,\eps})<C,
\end{equation}
where the constant $C$ does not dependent on $\eps$ and $t$.
Substituting the bound \eqref{ineq double integral} into \eqref{ineq fNeps A} and \eqref{ineq fNeps B}, we get the boundedness of the right-hand side of \eqref{ineq equicountinuity}
$$
\begin{aligned}
\int_{t_1}^{t_2}\left|\int_{\R^{3N}}A(v^1-v^2): (\nabla_{v^1v^1}^2\psi_{N}-\nabla_{v^1v^2}^2\psi_{N}) f_N^\eps\d V_{N} \right|\d s\leq C(\psi_N)|t_2-t_1|
,
\end{aligned}
$$
and 
$$
\begin{aligned}
\int_{t_1}^{t_2}\left|\int_{\R^{3N}}B(v^1-v^2)\cdot (\nabla_{v^1}^2\psi_{N}-\nabla_{v^2}^2\psi_{N}) f_N^\eps\d V_{N} \right|\d s\leq C(\psi_N)|t_2-t_1|.
\end{aligned}
$$
That is to say, for any fixed $N\in\N$, the sequence $\{\int\psi_{N} f_N^\eps\d V_{N}\}_{\eps>0}$ is equicontinuous on $[0,T]$ for any compactly supported $\psi_N\in W^{2,\infty}(\R^{3N})$, such as
\begin{equation*}\label{bound equicontinuity}
 \left|\int_{\R^{3N}}\psi_{N} f_N^\eps(t_2)\d V_{N}-\int_{\R^{3N}}\psi_{N} f_N^\eps(t_1)\d V_{N}\right|\leq C(\psi_N)N |t_2-t_1|,   
\end{equation*}
where the constant $C(\psi_N)$ does not dependent on $\eps$, $t_1$ or $t_2$. By the Arzelà–Ascoli theorem, when $\eps_\ell$ goes to $0$ in the subsequence $\{\eps_\ell\}_{\ell\in\N}$ extracted in \eqref{conv eps dense weakly L1}, we refer that it holds for any compactly supported $\psi_N\in W^{2,\infty}(\R^{3N})$
\begin{equation}\label{conv all in time}
\int_{\R^{3N}}\psi_{N} f_N^{\eps_\ell}(t)\d V_{N}\to \int_{\R^{3N}}\psi_{N} f_N(t)\d V_{N}, \quad \forall t\in[0,T],
\end{equation}
where the map $t\mapsto \int_{\R^{3N}}\psi_{N} f_N(t)\d V_{N}$ is Lipschitz  continuous. Combining the convergence \eqref{conv eps dense strongly Lp} on some dense time set and the continuity in time \eqref{conv all in time}, we summarise that the argument above actually gives the proof of the proposition below. 
\begin{proposition}\label{Decay of Fisher information}
For any fixed $N\in\N$ and $T>0$, there exits a $f_{N}\in L^\infty\left(0,T;L^1_2(\R^{3N})\right)$where $t\mapsto \int_{\R^{3N}}\psi_{N} f_N(t)\d V_{N}$ is Lipschitz continuous for any compactly supported $\psi_N\in W^{2,\infty}(\R^{3N})$.  And there exists some subsequence of the $\{f_{N}^{\eps_\ell}\}_{\ell\in\N}$  satisfying the regularised equation \eqref{eq fNeps}, which converges to  $f_N$ uniformly on some dense time set $\{\tau_k\}_{k\in\N}\subset[0,T]$ when $\eps_\ell\to0$, such as
\begin{equation}\label{convergence fNeps}
f_{N}^{\eps_\ell}(t) \to f_N(t)\quad\text{strongly in}\quad L^{1}_{loc}(\R^{3N}),\quad \forall t\in\{\tau_k\}_{k\in\N}.
\end{equation}
Moreover, the entropy and Fisher information of $f_N$ are monotonically decreasing in time, namely
\begin{equation}\label{decay of entropy fN}
\frac{1}{N}\int_{\R^{3N}}f_{N}(t_2)\log f_{N}(t_2)\d V_N\leq \frac{1}{N}\int_{\R^{3N}}f_{N}(t_1)\log f_{N}(t_1)\d V_N\leq \int_{\R^{3}}f^0\log f^0\d v^1,
\end{equation}
and
\begin{equation}\label{decay of fisher fN}
\frac{1}{N}\int_{\R^{3N}}\frac{|\nabla f_{N}(t_2)|^2}{f_{N}(t_2)}\d V_N\leq \frac{1}{N}\int_{\R^{3N}}\frac{|\nabla f_{N}(t_1)|^2}{f_{N}(t_1)}\d V_N\leq  \int_{\R^{3}}\frac{|\nabla_{v^1} f^0|^2}{ f^0}\d v^1.
\end{equation}
In particular, under Assumption \ref{assumption},  the (renormalised) entropy $\H_N(f_N)$ and (renormalised) Fisher information $\I_N(f_N)$ are uniform-in-$N$ bounded.
\end{proposition}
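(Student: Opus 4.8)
The statement is a bookkeeping of the estimates already derived for the regularised flow \eqref{eq fNeps}, so the plan is to record the uniform-in-$\eps$ bounds, extract a limit $f_N$ with the claimed space--time regularity, and pass the monotonicity and the uniform-in-$N$ bounds to the limit. For each fixed $\eps>0$ the regularised equation \eqref{eq fNeps} is linear and uniformly parabolic with smooth coefficients, so standard parabolic theory produces a classical solution $f_N^\eps\in C^1([0,T],C^\infty(\R^{3N}))$ enjoying Gaussian upper and lower bounds propagated from $f^{0,\eps}$; this is what legitimises the integrations by parts in Lemma \ref{lemma heat flow} and Lemma \ref{lemma Liouviell flow}. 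Since the truncated potential $a^\eps(z)=\chi(|z|/\eps)/|z|$ still obeys \eqref{ineq bound of a}, those two lemmas apply to $f_N^\eps$ and give the monotone decay \eqref{ineq eps entropy}--\eqref{ineq eps fisher} of $\H_N(f_N^\eps)$ and $\I_N(f_N^\eps)$, while a direct computation gives the second-moment identity \eqref{eq uniform 2nd}. Because $f^{0,\eps}$ has mass, momentum, energy, entropy and Fisher information bounded uniformly in $\eps$ and converging to those of $f^0$, all three bounds are uniform in $\eps$.

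Next I would extract the limit. The uniform entropy and second-moment bounds make $\{f_N^\eps(t)\}$ uniformly integrable, so the Dunford--Pettis criterion together with a diagonal argument over a countable dense set $\{\tau_k\}\subset[0,T]$ yields weak $L^1$ convergence of a subsequence $f_N^{\eps_\ell}(\tau_k)\rightharpoonup f_N(\tau_k)$ as in \eqref{conv eps dense weakly L1}. Since \eqref{ineq eps fisher} bounds $\sqrt{f_N^{\eps_\ell}}$ in $H^1_{loc}$, the compact Sobolev embedding upgrades this to strong $L^{3N/(3N-2)}_{loc}$ convergence \eqref{conv eps dense strongly Lp}, hence strong $L^1_{loc}$. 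For the time variable I would use the weak form of \eqref{eq fNeps}: its right-hand side is bounded by $C(\psi_N)N\int_{\B_R\times\B_R}f_{N,2}^\eps/|v^1-v^2|$, and this singular integral is controlled uniformly in $\eps$ and $t$ by \eqref{ineq double integral}. Hence $t\mapsto\int\psi_N f_N^{\eps_\ell}(t)$ is equi-Lipschitz, and Arzel\`a--Ascoli extends the convergence to all $t\in[0,T]$ as in \eqref{conv all in time}, with a Lipschitz limiting map; this pins down $f_N(t)\in L^\infty(0,T;L^1_2(\R^{3N}))$ for every $t$ and its time-regularity.

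Finally I would pass the monotonicity to the limit. The entropy $g\mapsto\int g\log g$ is lower semicontinuous for narrow convergence under a uniform confining-moment bound, and the Fisher information, written as $\tfrac4N\int|\nabla\sqrt g|^2$, is weakly lower semicontinuous in $H^1$; since $f_N^{\eps_\ell}(\tau_k)\to f_N(\tau_k)$ strongly in $L^1_{loc}$ (so $\sqrt{f_N^{\eps_\ell}(\tau_k)}\to\sqrt{f_N(\tau_k)}$ in $L^2_{loc}$ with $H^1_{loc}$ bounds), taking $\liminf_\ell$ in \eqref{ineq eps entropy}--\eqref{ineq eps fisher} gives \eqref{decay of entropy fN}--\eqref{decay of fisher fN} for $t_1,t_2$ in the dense set, and a density/time-continuity argument extends them to all $0\le t_1\le t_2\le T$; the right-hand sides are $\H_1(f^0)$ and $\I_1(f^0)$ because $\H_N(f^{0,\eps})=\H_1(f^{0,\eps})\to\H_1(f^0)$ and likewise for $\I_N$. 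On the tensorised data the renormalised quantities coincide with their one-particle values by Remark \ref{lemma subadditivity}, so $\H_N(f_N(t))\le\H_1(f^0)$ and $\I_N(f_N(t))\le\I_1(f^0)$ are uniform in $N$, which is the last assertion.

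The genuine difficulty does not lie in this assembly but in what it rests on: the uniform-in-$\eps$ taming of the Coulomb singularity. The only reason the drift and diffusion terms of the weak formulation stay bounded as $\eps\to0$ is the a priori Fisher-information bound --- via the change of variables $\rho$ of \eqref{change of variables} decoupling $v^1-v^2$ from $v^1+v^2$ and the subadditivity of the Fisher information, the singular integral reduces to an $L^3_{loc}$ bound on a one-particle marginal, which the Sobolev embedding converts into $\tfrac14\I_1$. Without the monotone Fisher decay established in Lemma \ref{lemma Liouviell flow} (itself leaning on Lemma \ref{lemma deacy of Fisher I2}), neither the equicontinuity in time nor the passage to the limit would go through, which is exactly why that lemma is the core of the argument.
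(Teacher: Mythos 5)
Your proposal follows essentially the same route as the paper: uniform-in-$\eps$ entropy, Fisher and second-moment bounds from Lemmas \ref{lemma heat flow} and \ref{lemma Liouviell flow}, Dunford--Pettis plus the Sobolev embedding for strong $L^1_{loc}$ convergence on a dense time set, equicontinuity of tested quantities via the change of variables \eqref{change of variables} and the bound \eqref{ineq double integral}, Arzel\`a--Ascoli, and lower semicontinuity of entropy and Fisher information to transfer \eqref{ineq eps entropy}--\eqref{ineq eps fisher} to the limit. The only difference is that you spell out the lower-semicontinuity step that the paper leaves implicit, so the argument is correct and matches the paper's proof.
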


To show the existence part of Proposition \ref{prop well-posedness of fN}, it remains to pass to the limit. However, the argument closely parallels that in  Section \ref{sec pass limit}, where we provide the details of passing from the marginal $f_{N,m}$ to the hierarchy $f_m$. To avoid redundancy, we omit a similar discussion for the regularised Liouville equation.

The uniqueness part of Proposition \ref{prop well-posedness of fN} follows from the remark below. Also, the uniqueness implies that every convergent subsequence in Proposition \ref{Decay of Fisher information} has the same limiting point, leading to a complete clarification of the limits.
\begin{remark}[Stability estimate]\label{Liouville uniqueness}
We assume two weak solutions $f_N'$ and $ f_N''$ in the sense of Definition \ref{def fN} with the same initial data $f_N'(0)=f_N''(0)=(f^0)^{\otimes N}$. Boundedness of the entropy (Proposition \ref{Decay of Fisher information}) enables us to  evaluate the relative entropy between $f_N'$ and $f_N''$ 
$$
\begin{aligned}
\frac{\d}{\d t}\int_{\R^{3N}}f_N'\log\frac{f_N'}{f_N''}\d V_N=\&    \int_{\R^{3N}}\p_t f_N'\log\frac{f_N'}{f_N''}\d V_N-\int_{\R^{3N}}\frac{f_N'}{f_N''}\p_t f_N'' \d V_N\\
=\&-\frac{1}{2N}\sum_{i\neq j}\int_{\R^{3N}}A(v^i-v^j):\nabla_{v^i-v^j}f_N'\otimes\nabla_{v^i-v^j} \log\frac{f_N'}{f_N''}\d V_N\\
\&+\frac{1}{2N}\sum_{i\neq j}\int_{\R^{3N}}A(v^i-v^j):\nabla_{v^i-v^j}f_N''\otimes\nabla_{v^i-v^j} \frac{f_N'}{f_N''}\d V_N\\
=\&-\frac{1}{2N}\sum_{i\neq j}\int_{\R^{3N}}f_N'A(v^i-v^j):\nabla_{v^i-v^j}\log\frac{f_N'}{f_N''}\otimes\nabla_{v^i-v^j} \log\frac{f_N'}{f_N''}\d V_N\leq 0,
\end{aligned}
$$
which implies that the weak solution of \eqref{eq fN} is unique. 
\end{remark}

\section{Convergence to the Landau hierarchy}\label{sec convergence}

In the previous section, we showed there exists a unique weak solution of the Liouville equation \eqref{eq fN} satisfying the decay of entropy \eqref{fN entropy decay} and Fisher information \eqref{fN Fisher decay}. In this section, we begin the proof of our main result Theorem \ref{thm main}. Then, for any integer $m$ between $1$ and $N-1$, we integrate over variables from $v^{m+1}$ to $v^N$ of $f_N$ to get the unique $m$-marginal $f_{N,m}$.
By Remark \ref{lemma subadditivity}, we refer that the marginals satisfies the following uniform-in-$N$ bounds: 
\begin{equation}\label{ineq fNm entropy}
\begin{aligned}
\&\sup_{t\in[0,T]}\int_{\R^{3m}}f_{N,m}(t)\log f_{N,m}(t) \d V_{m}\leq \frac{m}{N}\sup_{t\in[0,T]}\int_{\R^{3N}}f_N(t)\log f_N(t)\d V_N\\\leq\& m\int_{\R^3}f^0\log f^0\d v^1<+\infty,
\end{aligned} 
\end{equation}
and
\begin{equation}\label{ineq fNm fisher}
\begin{aligned}
\&\sup_{t\in[0,T]}\int_{\R^{3m}}f_{N,m}(t)|\nabla_{3m}\log f_{N,m}(t)|^2 \d V_{m}\leq \frac{m}{N}\sup_{t\in[0,T]}\int_{\R^{3N}}f_N(t)|\nabla\log f_N(t)|^2 \d V_N\\\leq\& m\int_{\R^3}f^0|\nabla \log f^0|^2\d v^1<+\infty,
\end{aligned} 
\end{equation}
which provide us sufficient compactness to prove large $N$ limit. And the marginals $f_{N,m}$ satisfy the BBGKY hierarchy \eqref{BBGKY} in weak sense defined below. 
\begin{definition}[Weak solutions of the BBGKY hierarchy]\label{def BBGKY hierarchy}
For any time $T>0$, a weak solution of the BBGKY hierarchy \eqref{BBGKY} with $f_{N,m}(0)=(f^0)^{\otimes m}$ satisfies the following
weak form of hierarchy, for any compactly supported $\vphi_{m}\in W^{4,\infty}(\R^{3m})$
\begin{equation}\label{eq weak BBGKY}
\begin{aligned}
\int_{\R^{3m}}\vphi_{m}\& f_{N,m}(T)\d V_{m}-\int_{\R^{3m}}\vphi_{m} (f^0)^{\otimes m}\d V_{m}\\=\&\frac{1}{N}\sum_{i\neq j}\int_0^T\int_{\R^{3m}}A(v^i-v^j):(\nabla_{v^iv^i}^2\vphi_{m}-\nabla_{v^iv^j}^2\vphi_{m})f_{N,m}\d V_{m}\d t\\\&+\frac{1}{N}\sum_{i\neq j}\int_0^T\int_{\R^{3m}}B(v^i-v^j)\cdot (\nabla_{v^i}\vphi_{m}-\nabla_{v^j}\vphi_{m}) f_{N,m}\d V_{m}\d t\\
\&+ \frac{N-m}{N}\sum_{i=1}^m \int_0^T\int_{\R^{3(m+1)}}  A(v^i-v^{m+1}):\nabla_{v^iv^i}^2\vphi_{m}\,f_{N,m+1}\d V_{m+1}\d t\\
\&+ \frac{2(N-m)}{N}\sum_{i=1}^m \int_0^T\int_{\R^{3(m+1)}}  B(v^i-v^{m+1})\cdot \nabla_{v^{i}}\vphi_{m}\,f_{N,m+1}\,\d V_{m+1}\d t.
\end{aligned}
\end{equation}
Additionally, the entropy and Fisher information of solutions are uniformly bounded as stated in \eqref{ineq fNm entropy} and \eqref{ineq fNm fisher}.
\end{definition}
The weak form \eqref{eq weak BBGKY} can be interpreted as choosing the test function $\vphi_N = \vphi_m \otimes (1^{\otimes (N-m)})$ in \eqref{eq weak Liouville}. Similarly to \eqref{exchanging}, we denote $V^{i,m+1}_m$ as
$$
V^{i,m+1}_m:=(v^1,\ldots,v^{i-1},v^{m+1},v^{i+1},\ldots,v^m).
$$
Swapping $v^i$ and $v^{m+1}$ in the last integral of \eqref{eq weak BBGKY}, and using the symmetry of $f_{N,m+1}$ along with the anti-symmetry of vector field $B$, we get the identity
$$
\begin{aligned}
\&\int_{\R^{3(m+1)}}  B(v^i-v^{m+1})\cdot \nabla_{v^{i}}\vphi_{m}(V_m)\,f_{N,m+1}\,\d V_{m+1}\\=\&-\int_{\R^{3(m+1)}}  B(v^i-v^{m+1})\cdot \nabla_{v^{i}}\vphi_{m}(V^{i,m+1}_m)\,f_{N,m+1}\,\d V_{m+1}.
\end{aligned}
$$
Moreover, the last integral in \eqref{eq weak BBGKY} can be written as
\begin{equation}\label{eq Vmi}
\begin{aligned}
2\int_{\R^{3(m+1)}}\&B(v^1-v^{m+1})\cdot \nabla_{v^{1}}\vphi_{m}(V_m)f_{N,m+1}\d V_{m+1}\\
=\&\int_{\R^{3(m+1)}}B(v^1-v^{m+1})\cdot \left(\nabla_{v^{1}}\vphi_{m}(V_m)-\nabla_{v^{1}}\vphi_{m}(V_m^{i,m+1})\right)f_{N,m+1}\d V_{m+1}.
\end{aligned}
\end{equation}
We now aim to find the large-$N$ limit of the weak solution of the BBGKY hierarchy. Again, the uniform-in-$N$ entropy bound \eqref{ineq fNm entropy} and uniform-in-$N$ bound of the second moment hold; by Dunford-Pettis criterion and a
diagonal extraction argument, for any $T>0$, there exist a function $f_m\in L^\infty\left(0,T;L^1_2(\R^{3m})\right)$, for a dense set $\{\tau_k\}_{k\in\N}\subset[0,T]$,  up to some subsequence $\{N_\ell\}_{\ell\in\N}$ and $N_\ell\to \infty$, such that
\begin{equation}\label{conv N dense weakly L1}
f_{N_\ell,m}(t)\rightharpoonup f_{m}(t)\quad\text{weakly in}\quad L^1(\R^{3m})\quad \forall t\in\{\tau_k\}_{k\in\N}\subset[0,T].
\end{equation} 
Recall \eqref{ineq fNm fisher}, we get the uniform-in-$N$ estimate
$$
\int_{\R^{3m}}\left|\nabla_{3m}\sqrt{f_{N,m}}\right|^2\d V_m<C(m),
$$
Furthermore, the compact embedding $ H^1(\mathcal{B}) \hookrightarrow L^p(\mathcal{B}) $ for any bounded set $\B\subset\R^{3m}$ and $p=6m/(3m-2)$ implies that the convergence in \eqref{conv N dense weakly L1} holds strongly in $L^{3m/(3m-2)}_{loc}$ as $N_\ell$ goes to $\infty$. That is,
\begin{equation}\label{convergence Lploc}
f_{N_\ell,m}(t)\to f_{m}(t)\quad\text{strongly in}\quad L^{3m/(3m-2)}_{loc}(\R^{3m})\quad \forall t\in\{\tau_k\}_{k\in\N}\subset[0,T].
\end{equation} 
For any compactly supported $\psi_{m}\in W^{2,\infty}(\R^{3m})$ with the supported set $\supp\vphi_{m}\subset (\B_R)^m$, we have
\begin{equation}\label{ineq equicountinuity N}
\begin{aligned}
\bigg|\int_{\R^{3m}}\psi_{m}\& f_{N,m}(t_2)\d V_{m}-\int_{\R^{3m}}\psi_{m} f_{N,m}(t_1)\d V_{m}\bigg|\\\leq\& \frac{m(m-1)}{N}\int_{t_1}^{t_2}\left|\int_{\R^{3m}}A(v^1-v^2): (\nabla_{v^1v^1}^2\psi_{m}-\nabla_{v^1v^2}^2\psi_{m}) f_{N,m}\d V_{m} \right|\d s\\\&+ \frac{m(m-1)}{N}\int_{t_1}^{t_2}\left|\int_{\R^{3m}}B(v^1-v^2)\cdot (\nabla_{v^1}^2\psi_{N}-\nabla_{v^2}^2\psi_{m}) f_{N,m}\d V_{m}  \right|\d s\\
\&+ \frac{m(N-m)}{N}\int_{t_1}^{t_2}\left|\int_{\R^{3(m+1)}}  A(v^1-v^{m+1}):\nabla_{v^1v^1}^2\psi_{m}\,f_{N,m+1}\d V_{m+1}\right|\d s\\
\&+ \frac{2m(N-m)}{N}\int_{t_1}^{t_2}\left|\int_{\R^{3(m+1)}}  B(v^1-v^{m+1})\cdot \nabla_{v^{1}}\psi_{m}\,f_{N,m+1}\,\d V_{m+1}\right|\d s.
\end{aligned}
\end{equation}
Similarly to \eqref{ineq fNeps A} and \eqref{ineq fNeps B}, the first two terms on the right-hand side of \eqref{ineq equicountinuity N} have the estimate
$$
\begin{aligned}
\&\bigg|\int_{\R^{3m}}A(v^1-v^2): (\nabla_{v^1v^1}^2\psi_{m}-\nabla_{v^1v^2}^2\psi_{m}) f_{N,m}\d V_{m} \bigg|
\\  \leq\&\left(\left\|\nabla_{v^1v^1}^2\psi_m\right\|_{L^\infty}+\left\|\nabla_{v^1v^2}^2\psi_m\right\|_{L^\infty}\right)\int_{\B_R\times \B_R}\frac{f_{N,2}(v^1,v^2)}{|v^1-v^2|} \d v^1\d v^2,   
\end{aligned}
$$
and
$$
\begin{aligned}
\&\bigg|\int_{\R^{3m}}B(v^1-v^2)\cdot (\nabla_{v^1}\psi_{m}-\nabla_{v^2}\psi_{m}) f_{N,m}\d V_{m} \bigg|\\\leq\&2\left(\left\|\nabla_{v^1v^1}^2\psi_m\right\|_{L^\infty}+\left\|\nabla_{v^1v^2}^2\psi_m\right\|_{L^\infty}\right)\int_{\B_R\times \B_R}\frac{f_{N,2}(v^1,v^2)}{|v^1-v^2|} \d v^1\d v^2.
\end{aligned}
$$
Notice that the integral over $v^{m+1}$ is in the whole space; the terms with $f_{N,m+1}$ can be bounded in the form
$$
\begin{aligned}
\bigg|\int_{\R^{3(m+1)}}\&A(v^1-v^{m+1}): \nabla_{v^1v^1}^2\psi_{m} f_{N,m+1}\d V_{m+1} \bigg|
\\  \leq\&\left\|\nabla_{v^1v^1}^2\psi_m\right\|_{L^\infty}\int_{\B_R\times \R^{3m}}\frac{f_{N,m+1}(V_{m+1})}{|v^1-v^{m+1}|} \d V_{m+1}\\\leq\&\left\|\nabla_{v^1v^1}^2\psi_m\right\|_{L^\infty}\left(\int_{\B_R\times \B_{2R}^c}\frac{f_{N,2}(v^1,v^2)}{|v^1-v^2|} \d v^1\d v^2+\int_{\B_{2R}\times \B_{2R}}\frac{f_{N,2}(v^1,v^2)}{|v^1-v^2|} \d v^1\d v^2\right)\\\leq\&\left\|\nabla_{v^1v^1}^2\psi_m\right\|_{L^\infty}\left(\frac{1}{R}+\int_{\B_{2R}\times \B_{2R}}\frac{f_{N,2}(v^1,v^2)}{|v^1-v^2|} \d v^1\d v^2\right),  
\end{aligned}
$$
and using the identity \eqref{eq Vmi},
$$
\begin{aligned}
\bigg|\int_{\R^{3(m+1)}}\&B(v^1-v^{m+1})\cdot \left(\nabla_{v^{1}}\psi_{m}(V_m)-\nabla_{v^{1}}\psi_{m}(V_m^{i,m+1})\right)f_{N,m+1}\d V_{m+1}\bigg|\\\leq\&2\int_{\R^{3(m+1)}}\frac{1}{|v^1-v^{m+1}|^2}\left|\nabla_{v^1}\psi_{m}(v^1)-\nabla_{v^1}\psi_{m}(v^{m+1})\right| f_{N,m+1}\d V_{m} 
\\   \leq\&2\left\|\nabla_{v^1v^1}^2\psi_m\right\|_{L^\infty}\int_{\B_R\times \R^{3m}}\frac{f_{N,m+1}(V_{m+1})}{|v^1-v^{m+1}|} \d V_{m+1}
\\  \leq\&2\left\|\nabla_{v^1v^1}^2\psi_m\right\|_{L^\infty}\left(\frac{1}{R}+\int_{\B_{2R}\times \B_{2R}}\frac{f_{N,2}(v^1,v^2)}{|v^1-v^2|} \d v^1\d v^2\right).
\end{aligned}
$$
By applying the same change of variables argument as in the previous section (see from \eqref{ineq first marginal} to \eqref{ineq double integral}), we have
\begin{equation}\label{ineq double integral N}
\sup_{t\in[0,T]}\int_{\B_R\times \B_R}\frac{f_{N,2}(v^1,v^2)}{|v^1-v^2|} \d v^1\d v^2\lesssim\, \sup_{t\in[0,T]}\I_N(f_N)\leq \I_1(f^{0})<C. 
\end{equation}
Therefore, by collecting the estimates above for the right-hand side of \eqref{ineq equicountinuity N}, we arrive at the following lemma, which will be useful in the next section.
\begin{lemma}\label{lemma uniform bounds with time}
For any $0\leq t_1\leq t_2\leq T$, for any compactly supported $\psi_{m}\in W^{2,\infty}(\R^{3m})$, the $m$-marginal and $(m+1)$-marginal of the weak solution of the Liouville equation  \eqref{eq fN} satisfy the following uniform-in-$N$ bounds:
\begin{equation}\label{ineq fNm A}
\begin{aligned}
\int_{t_1}^{t_2}\bigg|\int_{\R^{3N}}A(v^1-v^2):(\nabla_{v^1v^1}^2\psi_{m}-\nabla_{v^1v^2}^2\psi_{m})f_{N,m}\d V_{m}\bigg|\d s\leq C(\psi_m, f^0)|t_2-t_1|
,
\end{aligned}
\end{equation}
\begin{equation}\label{ineq fNm B}
\begin{aligned}
\int_{t_1}^{t_2}\bigg|\int_{\R^{3N}}B(v^1-v^2)\cdot (\nabla_{v^1}\psi_m-\nabla_{v^2}\psi_m) f_{N,m}\d V_{m}\bigg|\d s\leq C(\psi_m, f^0)|t_2-t_1|,
\end{aligned}
\end{equation}
\begin{equation}\label{ineq fNm+1 A}
\int_{t_1}^{t_2}\bigg|\int_{\R^{3(m+1)}}A(v^1-v^{m+1}): \nabla_{v^1v^1}^2\psi_{m} f_{N,m+1}\d V_{m+1}\bigg|\d s\leq C(\psi_m, f^0)|t_2-t_1|,
\end{equation}
and
\begin{equation}\label{ineq fNm+1 B}
\int_{t_1}^{t_2}\bigg|\int_{\R^{3(m+1)}}B(v^1-v^{m+1})\cdot \nabla_{v^{1}}\psi_{m}f_{N,m+1}\d V_{m+1}\d t\bigg|\d s\leq C(\psi_m, f^0)|t_2-t_1|.
\end{equation}
\end{lemma}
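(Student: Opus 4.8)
The plan is to reduce each of the four quantities to a single Coulomb‑weighted integral of the two‑particle marginal and to bound that integral uniformly in $N$ and in $t$ by the Fisher information. The key estimate is that $\sup_{t\in[0,T]}\int_{\B_{2R}\times\B_{2R}}|v^1-v^2|^{-1}f_{N,2}(t,v^1,v^2)\d v^1\d v^2\leq C(R,f^0)$ with $C(R,f^0)$ independent of $N$, which is precisely \eqref{ineq double integral N}. Its proof proceeds exactly as in \eqref{ineq first marginal}--\eqref{ineq double integral N}: apply the measure‑preserving change of variables $\rho$ of \eqref{change of variables}; bound the two‑particle difference marginal by the one‑particle marginal $\tilde f_{N,1}$; use Hölder's inequality with the locally integrable weight $|w^1|^{-3/2}$ to get $\int_{\B_{2R}}|w^1|^{-1}\tilde f_{N,1}\d w^1\lesssim\|\tilde f_{N,1}\|_{L^3(\B_{2R})}$; write $\|\tilde f_{N,1}\|_{L^3}=\|\sqrt{\tilde f_{N,1}}\|_{L^6}^2$ and invoke the Gagliardo--Nirenberg--Sobolev inequality $\dot H^1(\R^3)\hookrightarrow L^6(\R^3)$ to get $\|\sqrt{\tilde f_{N,1}}\|_{L^6}^2\lesssim\|\nabla\sqrt{\tilde f_{N,1}}\|_{L^2}^2=\tfrac14\I_1(\tilde f_{N,1})$; and finally combine the invariance $\I_2(\tilde f_{N,2})=\I_2(f_{N,2})$ with the subadditivity of Remark \ref{lemma subadditivity} and the Fisher decay \eqref{decay of fisher fN} to conclude $\I_1(\tilde f_{N,1}(t))\leq\I_2(f_{N,2}(t))\leq\I_N(f_N(t))\leq\I_1(f^0)$ for all $t\in[0,T]$.

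Granting this, I would estimate the four integrands in turn. For \eqref{ineq fNm A}, since $\|A(z)\|\lesssim|z|^{-1}$ and $\psi_m$ is supported in $(\B_R)^m$, I bound $|A(v^1-v^2):(\nabla_{v^1v^1}^2\psi_m-\nabla_{v^1v^2}^2\psi_m)|\lesssim\|\nabla^2\psi_m\|_{L^\infty}|v^1-v^2|^{-1}$ on the support, integrate out $v^3,\dots,v^m$ to produce $f_{N,2}$ on $\B_R\times\B_R$, apply the key estimate, and integrate over $s\in[t_1,t_2]$ to gain the factor $|t_2-t_1|$. For \eqref{ineq fNm B} the field $B(z)$ is only $O(|z|^{-2})$, which is not covered by the key estimate, so I first use the symmetry of $f_{N,m}$ and the anti‑symmetry of $B$, exactly as in \eqref{ineq fNeps B}, to rewrite the integrand so that the gradient factor becomes a difference of $\nabla_{v^1}\psi_m$ evaluated at two points differing by the swap $v^1\leftrightarrow v^2$; this factor is $O(|v^1-v^2|)$ by the mean value theorem, so the integrand is again $\lesssim\|\nabla^2\psi_m\|_{L^\infty}|v^1-v^2|^{-1}$ and the key estimate applies.

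For the hierarchy terms \eqref{ineq fNm+1 A} and \eqref{ineq fNm+1 B} the variable $v^{m+1}$ ranges over all of $\R^3$, so I split the $v^{m+1}$‑integral into the regions $|v^{m+1}|\leq 2R$ and $|v^{m+1}|>2R$: on the far region $v^1\in\B_R$ forces $|v^1-v^{m+1}|\geq R$, so the contribution is at most $\|\nabla^2\psi_m\|_{L^\infty}/R$ after integrating out the remaining variables and using that $f_{N,m+1}$ is a probability density; on the near region one integrates out $v^2,\dots,v^m$, uses exchangeability to recover $f_{N,2}$ on $\B_{2R}\times\B_{2R}$, and applies the key estimate. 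For \eqref{ineq fNm+1 B} one first antisymmetrises via the identity \eqref{eq Vmi} to turn the $O(|z|^{-2})$ field $B$ into an effective $O(|z|^{-1})$ singularity, and then runs the same near/far split. Collecting these bounds in \eqref{ineq equicountinuity N} and noting that the prefactors $m(m-1)/N$, $m(N-m)/N$ and $2m(N-m)/N$ are all bounded by constants depending only on $m$ yields the claimed constants $C(\psi_m,f^0)$, independent of $N$.

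The one genuinely substantial point is the key estimate of the first paragraph, where the Fisher information monotonicity established in Lemma \ref{lemma Liouviell flow} (hence in Proposition \ref{Decay of Fisher information}) enters through the chain $\I_1(\tilde f_{N,1})\leq\I_N(f_N)\leq\I_1(f^0)$ to render the Coulomb‑weighted integral uniformly controlled in $N$ and $t$. The remaining steps are routine splittings of the integration domain and symmetrisations of the singular kernels $A$ and $B$; in fact all of the required computations already appear in \eqref{ineq fNeps A}--\eqref{ineq double integral N} and in \eqref{ineq equicountinuity N}, so the proof of the lemma is essentially a matter of assembling them.
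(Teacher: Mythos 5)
Your proposal is correct and follows essentially the same route as the paper: reduce all four terms (after antisymmetrising the $B$-kernels and splitting the $v^{m+1}$-integral into near/far regions) to the Coulomb-weighted integral of $f_{N,2}$ on a bounded set, and control that integral uniformly in $N$ and $t$ via the change of variables \eqref{change of variables}, Hölder, the Sobolev embedding for $\sqrt{\tilde f_{N,1}}$, and the chain $\I_1(\tilde f_{N,1})\leq\I_N(f_N)\leq\I_1(f^0)$ from subadditivity and the Fisher-information decay. The only cosmetic deviation is using the whole-space $\dot H^1\hookrightarrow L^6$ inequality in place of the paper's local embedding, which changes nothing.
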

Plugging in bounds \eqref{ineq fNm A}-\eqref{ineq fNm+1 B} into \eqref{ineq equicountinuity N}, we get that, for any fixed $m\in\N$, the sequence \\
$\{\int\psi_m f_{N,m}\d V_{m}\}_{N\in\N}$ is equicontinuous on $[0,T]$, such as
\begin{equation}\label{bound equicontinuity N}
 \left|\int_{\R^{3m}}\psi_{m} f_{N,m}(t_2)\d V_{m}-\int_{\R^{3m}}\psi_{m} f_{N,m}(t_1)\d V_{m}\right|\leq C(m) |t_2-t_1|,   
\end{equation}
where the constant $C(m)$ does not dependent on $N$, $t_1$ or $t_2$. By the Arzelà–Ascoli theorem, when $N_\ell$ goes to $\infty$, we refer that it holds for any compactly supported $\psi_{m}\in W^{2,\infty}(\R^{3m})$ that
\begin{equation}\label{conv all in time N}
\int_{\R^{3m}}\psi_{m} f_{N_\ell,m}(t)\d V_{m}\to \int_{\R^{3m}}\psi_{m} f_m(t)\d V_{m}, \quad \forall t\in[0,T],
\end{equation}
where $t\mapsto \int_{\R^{3m}}\psi_{m} f_m(t)\d V_{m}$ is Lipschitz continuous. Combining the convergence \eqref{convergence Lploc} on the dense time set and \eqref{conv all in time N}, we obtain the following convergence result for BBGKY hierarchy. 
\begin{proposition}[$L^1$-convergence]\label{prop strong convergence}
For any fixed $m\in\N$ and $T>0$, there exists a function $f_m\in L^\infty\left(0,T;L^1_2(\R^{3m})\right)$, where $t\mapsto \int_{\R^{3m}}\psi_{m} f_m(t)\d V_{m}$ is Lipschitz continuous for any compactly supported $\psi_m\in W^{2,\infty}(\R^{3m})$. And there exits some subsequence of the $\{f_{N_\ell,m}\}_{\ell\in\N}$ satisfying the BBGKY hierarchy \eqref{BBGKY}, which converges to $f_m$ uniformly on some dense time set $\{\tau_k\}_{k\in\N}\subset[0,T]$ when $N_\ell\to\infty$, such as
$$
f_{N_\ell,m}(t)\to f_{m}(t)\quad\text{strongly in}\quad L^{3m/(3m-2)}_{loc}(\R^{3m})\quad \forall t\in\{\tau_k\}_{k\in\N}\subset[0,T].
$$
In particularly, the strong convergence holds in $L^1_{loc}$ for all fixed $m\in\N$ such as
\begin{equation}\label{convergence L1loc}
f_{N_\ell,m}(t)\to f_m(t) \quad\text{strongly in}\quad L^{1}_{loc}(\R^{3m})\quad \forall t\in\{\tau_k\}_{k\in\N}\subset[0,T].
\end{equation}
\end{proposition}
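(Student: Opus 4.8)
The plan is to assemble the uniform estimates already established in this section into a standard compactness scheme with three ingredients: weak $L^1$-compactness at each time of a countable dense subset of $[0,T]$; an upgrade to strong $L^{3m/(3m-2)}_{loc}$-compactness at those times using the Fisher information bound; and equicontinuity in time of the observables $t\mapsto\int_{\R^{3m}}\psi_m f_{N,m}(t)$. A diagonal extraction and the Arzelà–Ascoli theorem then glue these together. Concretely, I would first invoke Proposition \ref{Decay of Fisher information} and the sub-additivity of Remark \ref{lemma subadditivity} to obtain the uniform-in-$N$ entropy bound \eqref{ineq fNm entropy}, the Fisher information bound \eqref{ineq fNm fisher}, and the second-moment bound. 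The entropy and second-moment bounds make $\{f_{N,m}(t)\}_N$ uniformly integrable, so the Dunford–Pettis criterion together with a diagonal extraction over a countable dense set $\{\tau_k\}\subset[0,T]$ produces a subsequence $N_\ell\to\infty$ and a limit $f_m(t)$ with $f_{N_\ell,m}(t)\rightharpoonup f_m(t)$ weakly in $L^1(\R^{3m})$ for every $t\in\{\tau_k\}$; the uniform second moment and Fatou's lemma then place $f_m$ in $L^\infty(0,T;L^1_2)$.

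Next I would upgrade the weak convergence to strong convergence on the dense time set. The Fisher information bound \eqref{ineq fNm fisher} is equivalent to a uniform (in $N$ and in $t$) bound on $\|\nabla\sqrt{f_{N,m}(t)}\|_{L^2(\R^{3m})}$, so $\sqrt{f_{N,m}(t)}$ is bounded in $H^1(\B)$ for every bounded $\B\subset\R^{3m}$. By the compact embedding $H^1(\B)\hookrightarrow L^p(\B)$ with $p=6m/(3m-2)>2$, a further subsequence of $\sqrt{f_{N_\ell,m}(t)}$ converges strongly in $L^p_{loc}$; squaring and matching with the weak $L^1$ limit identifies the limit as $\sqrt{f_m(t)}$, which yields \eqref{convergence Lploc}. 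Hölder's inequality on bounded sets then downgrades this to \eqref{convergence L1loc}.

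To pass from the dense time set to all of $[0,T]$, I would use Lemma \ref{lemma uniform bounds with time}: inserting the bounds \eqref{ineq fNm A}–\eqref{ineq fNm+1 B} into the weak form \eqref{eq weak BBGKY} of the BBGKY hierarchy shows that for each compactly supported $\psi_m\in W^{2,\infty}$ the map $t\mapsto\int_{\R^{3m}}\psi_m f_{N,m}(t)$ is Lipschitz with constant independent of $N$, i.e. \eqref{bound equicontinuity N}. The family $\{\int\psi_m f_{N_\ell,m}(\cdot)\}_\ell$ is then equibounded and equicontinuous on $[0,T]$, hence precompact in $C([0,T])$ by Arzelà–Ascoli; since it already converges to $\int\psi_m f_m(\cdot)$ on the dense set $\{\tau_k\}$, the whole sequence converges uniformly on $[0,T]$ to a Lipschitz limit. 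This establishes \eqref{conv all in time N} and, combined with the strong convergence above, completes the proof.

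I expect the genuinely delicate step to be the second one: bare weak $L^1$-compactness is too weak to handle the Coulomb-singular terms, and it is the monotonicity of the Fisher information — Proposition \ref{Decay of Fisher information}, which rests on Lemma \ref{lemma Liouviell flow} and the Guillen–Silvestre estimate — that supplies the uniform $\dot H^1$ control of $\sqrt{f_{N,m}}$ needed both for the strong-convergence upgrade and, through the change of variables \eqref{change of variables}, for the uniform bound \eqref{ineq double integral N} on $\int_{\B_R\times\B_R}f_{N,2}/|v^1-v^2|\,\d v^1\d v^2$ that underlies the time-equicontinuity. Without the Fisher decay none of these estimates would be available, so the role of Section \ref{section uniform estimate} is essential here.
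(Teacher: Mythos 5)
Your proposal is correct and follows essentially the same route as the paper: uniform entropy/second-moment bounds plus Dunford--Pettis and a diagonal extraction over a dense time set, the Fisher information bound on $\sqrt{f_{N,m}}$ upgraded via the local compact Sobolev embedding to strong $L^{3m/(3m-2)}_{loc}$ convergence, and the uniform-in-$N$ Lipschitz estimate from Lemma \ref{lemma uniform bounds with time} combined with Arzel\`a--Ascoli to extend convergence of the observables to all of $[0,T]$. You also correctly identify the Fisher information decay as the ingredient that makes both the strong-convergence upgrade and the control of the Coulomb singularity in the equicontinuity estimate possible, exactly as in the paper.
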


\section{Pass to the limit}\label{sec pass limit}
To complete the proof of Theorem \ref{thm main}, it remains to justify that the limit $f_m$ satisfies Definition \ref{def Landau hierarchy} for the Landau hierarchy. In other words, we will show the proposition below.
\begin{proposition}\label{prop pass to the limit} 
Any limiting point $f_m$ obtained in Proposition \ref{prop strong convergence}, is a weak solution of the Landau hierarchy \eqref{eq Landau hierarchy} in the sense of Definition \ref{def Landau hierarchy}.    
\end{proposition}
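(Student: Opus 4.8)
The plan is to pass to the limit $N_\ell\to\infty$ in the weak form \eqref{eq weak BBGKY} of the BBGKY hierarchy --- obtained from \eqref{eq weak Liouville} with $\vphi_N=\vphi_m\otimes 1^{\otimes(N-m)}$ --- and to transfer the conservation laws and the entropy/Fisher bounds to the limit $f_m$ of Proposition \ref{prop strong convergence}. Fix a compactly supported $\vphi_m\in W^{4,\infty}(\R^{3m})$ with $\supp\vphi_m\subset(\B_R)^m$. The right-hand side of \eqref{eq weak BBGKY} splits into two ``self-interaction'' terms carrying a prefactor $1/N$ and involving $f_{N,m}$, and two ``collision'' terms carrying $(N-m)/N\to 1$ and involving $f_{N,m+1}$. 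The left-hand side and the initial datum converge by \eqref{conv all in time N} (valid up to $t=T$), which evaluated at $t=0$ also forces $f_m(0)=(f^0)^{\otimes m}$.

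Before touching the right-hand side I would upgrade Proposition \ref{prop strong convergence}, which only gives convergence on a countable dense set of times, to strong convergence in $L^1_{loc}\big((0,T)\times\R^{3m}\big)$ and in $L^1_{loc}\big((0,T)\times\R^{3(m+1)}\big)$; this spacetime strong convergence is what lets one integrate a singular kernel in time. For this I would invoke an Aubin--Lions/Simon argument: the uniform Fisher bound \eqref{ineq fNm fisher} makes $\nabla f_{N,m}=2\sqrt{f_{N,m}}\,\nabla\sqrt{f_{N,m}}$ bounded in $L^\infty(0,T;L^1)$, hence $f_{N,m}$ bounded in $L^\infty(0,T;W^{1,1}_{loc})$ with the compact embedding $W^{1,1}(\B)\hookrightarrow L^1(\B)$, while the equicontinuity estimate \eqref{bound equicontinuity N} controls $\p_t f_{N,m}$ in the dual of the space of compactly supported $W^{2,\infty}$ functions; the same holds at level $m+1$. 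The two $1/N$ terms then vanish: by exchangeability each of the $m(m-1)$ summands equals its $(i,j)=(1,2)$ representative, which \eqref{ineq fNm A}--\eqref{ineq fNm B} bound uniformly in $N$ on $[0,T]$, so the contribution is $O\big(m(m-1)/N\big)\to 0$.

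The heart of the proof is the passage to the limit in the two collision terms, i.e.\ showing for each $i\le m$ that $\int_0^T\!\int_{\R^{3(m+1)}}A(v^i-v^{m+1}):\nabla_{v^iv^i}^2\vphi_m\,f_{N_\ell,m+1}\,\d V_{m+1}\d t$ converges to the same integral with $f_{m+1}$, together with the $B$-analogue. I would first apply the symmetrization identity \eqref{eq Vmi}, which replaces $2B(v^i-v^{m+1})\cdot\nabla_{v^i}\vphi_m$ by $B(v^i-v^{m+1})\cdot\big(\nabla_{v^i}\vphi_m(V_m)-\nabla_{v^i}\vphi_m(V_m^{i,m+1})\big)$, a kernel of size $\lesssim|v^i-v^{m+1}|^{-1}$ since the bracket is Lipschitz in $v^i-v^{m+1}$ (here $\vphi_m\in W^{2,\infty}$ suffices); thus in both collision terms the integrand is dominated by $C(\vphi_m)\,|v^i-v^{m+1}|^{-1}f_{N,m+1}$. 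Then I split the $v^{m+1}$ integration: on $\{|v^{m+1}|>R'\}$ with $R'>R+1$ one has $|v^i-v^{m+1}|\ge 1$, so the contribution is $\le T\int_{|v|>R'}f_{N,1}\le 3T/(R')^2$, uniformly in $N$ and small for $R'$ large; on the bounded part $(\B_{R'})^{m+1}$ one splits further at $|v^i-v^{m+1}|=\eta$. Where $|v^i-v^{m+1}|>\eta$ the kernel is bounded by $\eta^{-1}$ and the strong spacetime $L^1_{loc}$ convergence of the previous step passes the limit; where $|v^i-v^{m+1}|\le\eta$ one integrates out the $m-1$ spectator variables and uses exchangeability to reduce to $\int_{\B_{R'}\times\B_{R'},\,|v^1-v^2|\le\eta}\frac{f_{N,2}+f_2}{|v^1-v^2|}$, which the change of variables \eqref{change of variables} together with the uniform-in-$N$ Fisher bound of Proposition \ref{Decay of Fisher information} (and lower semicontinuity of $\I_2$ to bound $\I_2(f_2)$), exactly as in the chain \eqref{ineq first marginal}--\eqref{ineq double integral N}, bounds by $\lesssim\eta$ --- uniformly in $N$. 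Sending $N_\ell\to\infty$, then $\eta\to0$, then $R'\to\infty$, and using $|A(z)|\le|z|^{-1}$ and $\|\nabla^2\vphi_m\|_{L^\infty}<\infty$, yields the weak form \eqref{eq weak Landau hierarchy}.

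Finally, the conservation laws and a priori bounds of Definition \ref{def Landau hierarchy} follow from the limit procedure: testing the weak-$L^1$ convergence \eqref{conv N dense weakly L1} against $1$ and against $\min(|V_m|^2,k)$ (then $k\to\infty$) gives $\int f_m(t)=1$ and $\int|V_m|^2f_m(t)\le m\int f^0|v|^2$ on the dense time set, with the matching lower bound --- hence energy conservation --- following from the tightness provided by the uniform entropy and moment bounds \eqref{ineq fNm entropy}, and Lipschitz continuity in time plus density extends these to all $t\in[0,T]$; the bounds $\H_m(f_m(t))\le\H_1(f^0)$ and $\I_m(f_m(t))\le\I_1(f^0)$ come from weak-$L^1$ (resp.\ $L^1_{loc}$-strong) lower semicontinuity of $\H_m$ and of $\I_m$ (recall $\I_m(g)=\tfrac4m\int|\nabla\sqrt g|^2$), the sub-additivity of Remark \ref{lemma subadditivity}, and the monotonicity \eqref{decay of fisher fN}. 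The main obstacle throughout is the passage to the limit against the Coulomb-singular kernels $A\sim|z|^{-1}$ and $B\sim|z|^{-2}$: the $B$-singularity is defused by the symmetrization \eqref{eq Vmi}, but the residual $|z|^{-1}$ singularity of both terms can only be absorbed because the Fisher information of $f_N$ is bounded \emph{uniformly in $N$} --- the new input of the paper --- which, transported to the two-particle marginal by \eqref{change of variables}, makes the family $\{f_{N,2}/|v^1-v^2|\}$ equi-integrable on bounded sets; the secondary technical point is the spacetime compactness upgrade, without which the time integration of the singular kernel is not justified.
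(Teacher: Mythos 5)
Your proposal is correct in substance and follows the same skeleton as the paper: test with $\vphi_N=\vphi_m\otimes 1^{\otimes(N-m)}$, kill the two $O(m^2/N)$ self-interaction terms via the uniform bounds of Lemma \ref{lemma uniform bounds with time}, defuse the $|z|^{-2}$ singularity of $B$ by the symmetrisation \eqref{eq Vmi}, and absorb the residual $|z|^{-1}$ singularity through the uniform-in-$N$ Fisher information transported to the two-particle marginal by the change of variables \eqref{change of variables}. Where you genuinely diverge is the compactness mechanism used to pass to the limit on the region where the kernel is bounded: you upgrade Proposition \ref{prop strong convergence} to strong convergence in $L^1_{loc}\big((0,T)\times\R^{3(m+1)}\big)$ by an Aubin--Lions/Simon argument (Fisher $\Rightarrow$ $L^\infty_t W^{1,1}_{loc}$, equicontinuity $\Rightarrow$ control of $\p_t f_{N,m}$ in a negative space) and then integrate against sharp indicator truncations, whereas the paper never invokes Aubin--Lions: it uses a smooth partition of unity $\chi_{\delta,in},\chi_{\delta,mid},\chi_{\delta,out}$ so that the truncated kernel becomes a fixed compactly supported $W^{2,\infty}$ test function $\psi_{m+1}$, and then combines the mean value theorem in time, the Lipschitz equicontinuity \eqref{bound equicontinuity N}, and convergence on the dense time set to handle $\int_0^T\mathbf{J}_{mid}$. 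Your route buys a cleaner ``pass to the limit against any bounded kernel'' statement, at the price of setting up the Aubin--Lions lemma carefully: the dual of compactly supported $W^{2,\infty}$ is not a convenient third space, so you should phrase the time-derivative bound in, say, $H^{-s}_{loc}$ with $s>3m/2+2$ before citing Simon; the paper's route avoids this but needs the smooth cut-offs precisely so that $\psi_{m+1}$ is an admissible test function. Your far-field treatment (Chebyshev in $|v^{m+1}|$ using the energy bound) also differs from the paper's cut at $|v^1-v^{m+1}|\ge 1/\delta$ using the decay of $A$ and $B$; both work. Two small caveats: your claim that the matching lower bound for the energy (hence conservation rather than the inequality $\int|V_m|^2 f_m\le m\int|v|^2f^0$) follows from ``tightness provided by the entropy and moment bounds'' needs uniform integrability of $|V_m|^2 f_{N,m}$, which second moments plus entropy alone do not immediately give (the paper does not detail this point either); and the bound on $\I_2(f_2)$ for the limit should indeed be obtained by lower semicontinuity, as you note.
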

We shall give the detailed proof in this section.  Also, the argument can be easily adapted to complete the proof of existence of the Liouville equation \eqref{eq fN} mentioned in the end of Section \ref{section uniform estimate}.

Our aim is to pass to the limit from the weak form of the BBGKY hierarchy \eqref{eq weak BBGKY} to the  weak form of the Landau hierarchy \eqref{eq weak Landau hierarchy}. Firstly, for any $T>0$, assuming that $\vphi_m\in W^{4,\infty}(\R^{3m})$ with the compact supported set $supp\,\vphi_m\subset (\B_R)^m$, it holds
$$
\left|\int_{\R^{3m}}\vphi_{m} f_{N,m}(T)\d V_{m}-\int_{\R^{3m}}\vphi_{m}f_{m}(T)\d V_{m}\right|\leq\left\|\vphi_{m}\right\|_{L^\infty\left((\B_R)^m\right)}\big\|f_{N,m}(T)-f_{m}(T)\big\|_{L^1\left((\B_R)^m\right)},
$$
which goes to $0$ when $N\to\infty$ (up to a subsequence) thanks to Proposition \ref{prop strong convergence}. Secondly, the first two terms on the right-hand side of \eqref{eq weak BBGKY} have the following estimates due to the estimates \eqref{ineq fNm A} and \eqref{ineq fNm B} in Lemma \ref{lemma uniform bounds with time},
$$
\begin{aligned}
\&\bigg|\frac{1}{N}\sum_{i\neq j}^m\int_0^T\int_{\R^{3m}}A(v^i-v^j):(\nabla_{v^iv^i}^2\vphi_{m}-\nabla_{v^iv^j}^2\vphi_{m})f_{N,m}(t)\d V_{m}\d t\bigg|\leq C(\vphi_m, f^0)\frac{m^2T}{N},
\end{aligned}
$$
and
$$
\begin{aligned}
\&
\bigg|\frac{1}{N}\sum_{i\neq j}^m\int_0^T\int_{\R^{3m}}B(v^i-v^j)\cdot (\nabla_{v^i}\vphi_{m}-\nabla_{v^j}\vphi_{m}) f_{N,m}(t)\d V_{m}\d t\bigg|\leq C(\vphi_m, f^0)\frac{m^2T}{N},   
\end{aligned}
$$
which converge to $0$ when $N\to\infty$.
The rest of the section will focus on demonstrating the convergence of the last two terms of \eqref{eq weak BBGKY}. In order to pass to the limit of the term involving the vector $B$, we decompose the distance between it and its large $N$ limit as follows:
\begin{align}\label{pass to the limit B}
\notag\&\bigg|\frac{N-m}{N}\sum_{i=1}^m \int_0^T\int_{\R^{3(m+1)}}  B(v^i-v^{m+1})\cdot \left(\nabla_{v^{i}}\vphi_{m}(V_m)-\nabla_{v^{i}}\vphi_{m}(V_m^{i,m+1})\right)f_{N,m+1}\d V_{m+1}\d t\\\notag
\&\qquad-\sum_{i=1}^m \int_0^T\int_{\R^{3(m+1)}}  B(v^i-v^{m+1})\cdot \left(\nabla_{v^{i}}\vphi_{m}(V_m)-\nabla_{v^{i}}\vphi_{m}(V_m^{i,m+1})\right)f_{m+1}\d V_{m+1}\d t\bigg|\\\notag
\leq \& \frac{m^2}{N}\int_0^T\bigg|\int_{\R^{3(m+1)}}  B(v^1-v^{m+1})\cdot \left(\nabla_{v^{1}}\vphi_{m}(V_m)-\nabla_{v^{1}}\vphi_{m}(V_m^{1,m+1})\right)f_{N,m+1}\d V_{m+1}\bigg|\d t\\\notag
\&+m \bigg|\int_0^T\int_{\R^{3(m+1)}}  B(v^1-v^{m+1})\cdot \left(\nabla_{v^{1}}\vphi_{m}(V_m)-\nabla_{v^{1}}\vphi_{m}(V_m^{1,m+1})\right)\left(f_{N,m+1}-f_{m+1}\right)\d V_{m+1}\d t\bigg|\\
\leq\&
\frac{m^2}{N}\left|\int_0^T\textbf{I}(t,N)\d t\right|+m \left|\int_0^T\textbf{J}(t,N)\d t\right|,
\end{align}
where we define
$$
\textbf{I}(t,N):=\int_{\R^{3(m+1)}}  B(v^1-v^{m+1})\cdot \left(\nabla_{v^{1}}\vphi_{m}(V_m)-\nabla_{v^{1}}\vphi_{m}(V_m^{1,m+1})\right)f_{N,m+1}(t)\d V_{m+1},
$$
and
$$
\textbf{J}(t,N):=\int_{\R^{3(m+1)}}  B(v^1-v^{m+1})\cdot \left(\nabla_{v^{1}}\vphi_{m}(V_m)-\nabla_{v^{1}}\vphi_{m}(V_m^{1,m+1})\right)\left(f_{N,m+1}(t)-f_{m+1}(t)\right)\d V_{m+1}.
$$
By the uniform bound \eqref{ineq fNm+1 B} and in Lemma \ref{lemma uniform bounds with time}, the first term asymptotically vanishes such as
$$\frac{m^2}{N}\left|\int_0^T\textbf{I}(t,N)\d t\right|\to 0\quad\text{as}\quad N\to\infty.$$
The integral  $\mathbf{J}$  is decomposed as follows:
$$
\begin{aligned}
\textbf{J}(t,N)=\&\int_{\R^{3(m+1)}} \Big(\chi_{\delta,in}(|v^1-v^{m+1}|)+\chi_{\delta,out}(|v^1-v^{m+1}|)+\chi_{\delta,mid}(|v^1-v^{m+1}|)\Big) \\\&\qquad\qquad\qquad B(v^1-v^{m+1})\cdot \left(\nabla_{v^{1}}\vphi_{m}(V_m)-\nabla_{v^{1}}\vphi_{m}(V_m^{1,m+1})\right)\left(f_{N,m+1}-f_{m+1}\right)\d V_{m+1}\\
=:\&\textbf{J}_{in}(t,\delta,N)+\textbf{J}_{out}(t,\delta,N)+\textbf{J}_{mid}(t,\delta,N),
\end{aligned}
$$
where we introduce smooth functions $\chi_{\delta,in}\,,\,\chi_{\delta,out}\,,\,\chi_{\delta,mid}\in C^\infty([0,\infty))$ that satisfy the following conditions for some small $\delta>0$:
$$
\chi_{\delta,in}(r)=\left\{\begin{array}{cc}
    1 & \quad\mbox{ for } 0\leq r\leq \delta/2 \\
     
      0 & \mbox{ for }  r\geq \delta 
\end{array}\right., \qquad\chi_{\delta,out}(r)=\left\{\begin{array}{cc}
    0 & \quad\mbox{ for } 0\leq r\leq  1/\delta \\
     
      1 & \mbox{ for } r\geq 2/\delta 
\end{array}\right.,
$$
and
$$
\displaystyle\chi_{\delta,mid}(r)=\left\{\begin{array}{cc}
    0 & \quad\mbox{ for } 0\leq r\leq  \delta/2 \\
     1 & \quad\mbox{ for } \delta\leq r< 1/\delta \\
      0 & \mbox{ for } r\geq 2/\delta 
\end{array}\right..
$$
And these functions form a partition of the unity, ensuring that
$$
\chi_{\delta,in}(r)+\chi_{\delta,out}(r)+\chi_{\delta,mid}(r)=1,\quad \forall r\in[0,\infty).
$$
For the first term $\textbf{J}_{in}$, we have the estimate,
$$
\begin{aligned}
\left|\textbf{J}_{in}(t,\delta,N) \right|\leq\&2\|\chi_{\delta,in}\|_{L^\infty}\int_{|v^1-v^{m+1}|\leq \delta}  \frac{\big|\nabla_{v^{1}}\vphi_{m}(V_m)-\nabla_{v^{1}}\vphi_{m}(V_m^{1,m+1})\big|}{|v^1-v^{m+1}|^2} \Big|f_{N,m+1}+f_m\Big|\d V_{m+1} \\\leq\&2\sqrt{\delta}\int_{|v^1-v^{m+1}|\leq \delta}  \frac{\big|\nabla_{v^{1}}\vphi_{m}(V_m)-\nabla_{v^{1}}\vphi_{m}(V_m^{1,m+1})\big|}{|v^1-v^{m+1}|} \frac{|f_{N,m+1}+f_m|}{|v^1-v^{m+1}|^{\frac{3}{2}}}\d V_{m+1} \\\leq\&2\sqrt{\delta}\left\|\nabla_{v^1v^1}^2\vphi_m\right\|_{L^\infty}\bigg(\int_{(\B_{R+\delta})^2}\frac{f_{N,2}}{|v^1-v^{m+1}|^{\frac{3}{2}} }\d v^1\d v^{m+1}\\&\qquad\qquad\qquad\qquad\qquad\qquad+\int_{(\B_{R+\delta})^2}\frac{f_{2}}{|v^1-v^{m+1}|^{\frac{3}{2}} }\d v^1\d v^{m+1} \bigg),
\end{aligned}
$$
where $f_2$ here is understood as the second-marginal of $f_{m+1}$. Notice that in the rest we will not specify anymore the norms of the cut-off functions since they are always taken in $L^\infty$-norm and they are equal to 1.
We perform the change of variables similar to \eqref{change of variables} such as
$$
(v^1,v^{m+1})\mapsto\frac{1}{\sqrt{2}}(v^1-v^{m+1},v^1+v^{m+1})=:(w^1,w^2),
$$
and have the similar estimate as \eqref{ineq first marginal}
$$
\begin{aligned}
\int_{(\B_{R+\delta})^2}\&\frac{f_{N,2}}{|v^1-v^{m+1}|^{\frac{3}{2}} }\d v^1\d v^{m+1}\leq \frac{1}{2^{3/4}}\int_{(\B_{2R})^2}\frac{\tilde{f}_{N,2}(w^1,w^2)}{|w^1|^{\frac{3}{2}}} \d w^1\d w^2\leq \frac{1}{2^{3/4}}\int_{\B_{2R}}\frac{\tilde{f}_{N,1}(w^1)}{|w^1|^{\frac{3}{2}} } \d w^1\\
\leq\& \frac{1}{2^{3/4}} \left(\int_{\B_{2R}}\frac{1}{|w^1|^{\frac{9}{4}}}\d w^1\right)^{\frac{2}{3}}\left\|\tilde{f}_{N,1}\right\|_{L^3(\B_{2R})}\leq C_p\left\|\tilde{f}_{N,1}\right\|_{L^3(\B_{2R})}.
\end{aligned}
$$
The compact embedding imply that $H^1(\B_{2R})\hookrightarrow L^6(\B_{2R})$, 
$$
\left\|\tilde{f}_{N,1}\right\|_{L^3(\B_{R'})}\leq C\left\|\nabla\sqrt{\tilde{f}_{N,1}}\right\|_{L^2(\B_{R'})}^2= \frac{C}{4}\I_1(\tilde{f}_{N,1}),
$$
and Lemma \ref{lemma subadditivity} together with the uniform bound \eqref{ineq fNm fisher} imply that for any $t\in[ 0,T]$,
$$
\I_1(\tilde{f}_{N,1})\leq \I_2(\tilde{f}_{N,2})=\I_2(f_{N,2})\leq \I_N(f_{N})<\I_1(f^0).
$$
Therefore, we get the uniform-in-$N$ bound such as
\begin{equation}\label{ineq supt fN2}
\sup_{t\in[0,T]}\int_{(\B_{R+\delta})^2}\frac{f_{N,2}(t,v^1,v^{m+1})}{|v^1-v^{m+1}|^{\frac{3}{2}} }\d v^1\d v^{m+1}<C,
\end{equation}
where the constant $C$ does not dependent on $N$. Similar bound holds for $f_2$ as well
\begin{equation}\label{ineq supt f2}
\sup_{t\in[0,T]}\int_{(\B_{R+\delta})^2}\frac{f_{2}(t,v^1,v^{m+1})}{|v^1-v^{m+1}|^{\frac{3}{2}} }\d v^1\d v^{m+1}<C.
\end{equation}
Substituting the bound \eqref{ineq supt fN2} and \eqref{ineq supt f2} into $\int_0^T|\textbf{J}_{in}(t,\delta,N)|\d t$, we obtain the uniform-in-$N$ bound
\begin{equation}\label{ineq In}
\int_0^T|\textbf{J}_{in}(t,\delta,N)|\d t\leq T\sup_{t\in[0,T]}|\textbf{J}_{in}(t,\delta,N)|\leq C(f^0,\vphi_m)T\sqrt{\delta}. 
\end{equation}
For the second term $\textbf{J}_{out}$, we have the uniform-in-$N$ estimate,
\begin{align}\label{ineq Out}
\notag\int_0^T\big|\textbf{J}_{out}(t,\delta,N)\& \big|\d t\\\leq\notag\&2\int_0^T\int_{|v^1-v^{m+1}|\geq \frac{1}{\delta}}  \frac{\left|\nabla_{v^{1}}\vphi_{m}(V_m)-\nabla_{v^{1}}\vphi_{m}(V_m^{1,m+1})\right|}{|v^1-v^{m+1}|^2} \left(f_{N,m+1}+f_{m+1}\right)\d V_{m+1}\d t \\\leq\&\notag4\delta^2\|\nabla_{v^1}\vphi_m\|_{L^\infty(\R^{3m})}\int_0^T\int_{|v^1-v^{m+1}|\geq \frac{1}{\delta}} \left(f_{N,m+1}+f_{m+1}\right)\d V_{m+1}\d t\\\leq\&8T\|\nabla_{v^1}\vphi_m\|_{L^\infty(\R^{3m})}\delta^2.
\end{align}
For the third term $\textbf{J}_{mid}$, we let the test function on $\R^{3(m+1)}$ be
$$
\psi_{m+1}:=\chi_{\delta,mid}(|v^1-v^{m+1}|)\,B(|v^1-v^{m+1}|)\cdot \left(\nabla_{v^{1}}\vphi_{m}(V_m)-\nabla_{v^{1}}\vphi_{m}(V_m^{1,m+1})\right),
$$
which belongs to $W^{3,\infty}(\R^{3(m+1)})$ since $\vphi_m\in W^{4,\infty}(\R^{3m})$, and it is also compactly supported by construction. Using Proposition \ref{prop strong convergence}, we know that
$$
t\mapsto\int_{\R^{3(m+1)}}\psi_{m+1}f_{N,m+1}(t)\d V_{m+1}\quad\text{and}\quad t\mapsto\int_{\R^{3(m+1)}}\psi_{m+1}f_{m+1}(t)\d V_{m+1},
$$
are both continuous on $[0,T]$, and thus there exists a $r\in[0,T]$ such that 
$$
\begin{aligned}
\left|\int_0^T\textbf{J}_{mid}(t,\delta,N) \d t\right|=\&\left|\int_0^T\int_{\R^{3(m+1)}}\psi_{m+1}\left(f_{N,m+1}(t)-f_{m+1}(t)\right)\d V_{m+1}\d t\right|\\
=\& T \left|\int_{\R^{3(m+1)}}\psi_{m+1}\left(f_{N,m+1}(r)-f_{m+1}(r)\right)\d V_{m+1}\right|,
\end{aligned}
$$
due to the mean-value theorem. Proposition \ref{prop strong convergence} yields that there exists a subset of the dense set $\{\tau_k\}_{k\in\N}\subset[0,T]$ such that $\lim_{k'\to\infty}\tau_{k'}=r$ and $f_{N_\ell,m+1}$ converges to $f_{m+1}$ uniformly on $\{\tau_{k'}\}_{k'\in\N}$. Then, by \eqref{bound equicontinuity N}, there exists some constant $C$ independent of $N$, $r$ and $\tau_{k'}$ such that
$$
\begin{aligned}
\bigg|\int_{\R^{3(m+1)}}\&\psi_{m+1}\left(f_{N,m+1}(r)-f_{m+1}(r)\right)\d V_{m+1}\bigg| \\
\leq\& \left|\int_{\R^{3(m+1)}}\psi_{m+1}\left(f_{N,m+1}(r)-f_{N,m+1}(\tau_{k'})\right)\d V_{m+1}\right|\\&+\left|\int_{\R^{3(m+1)}}\psi_{m+1}\left(f_{N,m+1}(\tau_{k'})-f_{m+1}(\tau_{k'})\right)\d V_{m+1}\right|\\&+\left|\int_{\R^{3(m+1)}}\psi_{m+1}\left(f_{m+1}(\tau_{k'})-f_{m+1}(r)\right)\d V_{m+1}\right|\\
\leq\& C|r-\tau_{k'}|+\left\|\psi_{m+1}\right\|_{L^\infty(\R^{3(m+1)})}\int_{supp\,\psi_{m+1}}\left|f_{N,m+1}(\tau_{k'})-f_{m+1}(\tau_{k'})\right|\d V_{m+1},
\end{aligned}
$$
which implies that for some $\tau_{k'}$ closed enough to $r$ such that $|r-\tau_{k'}|<\delta$, 
\begin{equation}\label{ineq Mid}
\left|\int_0^T\textbf{J}_{mid}(t,\delta,N) \d t\right|\leq CT\delta+T\left\|\psi_{m+1}\right\|_{L^\infty(\R^{3(m+1)})}\int_{supp\,\psi_{m+1}}\left|f_{N,m+1}(\tau_{k'})-f_{m+1}(\tau_{k'})\right|\d V_{m+1}.
\end{equation}
Combining \eqref{ineq In}, \eqref{ineq Out} and \eqref{ineq Mid}, it has the estimate for the second term of \eqref{pass to the limit B}
$$
\begin{aligned}
m \bigg|\int_0^T\textbf{J}(t,N)\&\d t\bigg|=m\left|\int_0^T\left(\textbf{J}_{in}(t,\delta,N)+\textbf{J}_{out}(t,\delta,N)+\textbf{J}_{mid}(t,\delta,N)\right)\d t\right| \\
\leq \& C(f^0,\vphi_m)T(\sqrt{\delta}+\delta+\delta^2)+C(\vphi_m,\delta)T\int_{supp\psi_{m+1}}\left|f_{N,m+1}(\tau_{k'})-f_{m+1}(\tau_{k'})\right|\d V_{m+1},
\end{aligned}    
$$
where the last integral converges uniformly in the choice of $\tau_{k'}$ as $N_\ell\to\infty$ by Proposition \ref{prop strong convergence}; then this whole term converges to $0$ by the arbitrariness of $\delta$. Therefore, \eqref{pass to the limit B} converges to $0$ as $N_\ell\to\infty$.

\medskip

Now, we move to the estimate involving the matrix $A$, where the similar decomposition of the distance between it and its large $N$ limit can be performed  
\begin{equation}\label{pass to the limit A}
\begin{aligned}
\bigg|\frac{N-m}{N}\&\sum_{i=1}^m \int_0^T\int_{\R^{3(m+1)}}  A(v^i-v^{m+1}): \nabla^2_{v^{i}v^{i}}\vphi_{m}(V_m)f_{N,m+1}\d V_{m+1}\d t\\
\&\qquad\qquad-\sum_{i=1}^m \int_0^T\int_{\R^{3(m+1)}}  A(v^i-v^{m+1}): \nabla^2_{v^{i}v^{i}}\vphi_{m}(V_m)f_{m+1}\d V_{m+1}\d t\bigg|\\
\leq \& \frac{m^2}{N} \int_0^T\bigg|\int_{\R^{3(m+1)}}  A(v^1-v^{m+1}): \nabla^2_{v^{1}v^{1}}\vphi_{m}(V_m)f_{N,m+1}\d V_{m+1}\bigg|\d t\\
\&+m \bigg|\int_0^T\int_{\R^{3(m+1)}}  A(v^1-v^{m+1}): \nabla^2_{v^{1}v^{1}}\vphi_{m}(V_m)\left(f_{N,m+1}-f_{m+1}\right)\d V_{m+1}\d t\bigg|\\
\leq\& \frac{m^2}{N}\left|\int_0^T\textbf{I}'(t,N)\d t\right|+m \left|\int_0^T\textbf{J}'(t,N)\d t\right|,
\end{aligned}
\end{equation}
where we define
$$
\textbf{I}'(t,N):=\int_{\R^{3(m+1)}}  A(v^1-v^{m+1}): \nabla^2_{v^{1}v^{1}}\vphi_{m}(V_m)f_{m+1}\d V_{m+1},
$$
and
$$
\textbf{J}'(t,N):=\int_{\R^{3(m+1)}}  A(v^1-v^{m+1}): \nabla^2_{v^{1}v^{1}}\vphi_{m}(V_m)\left(f_{N,m+1}-f_{m+1}\right)\d V_{m+1}.
$$
By the uniform bound \eqref{ineq fNm+1 A} and  in Lemma \ref{lemma uniform bounds with time}, the first term asymptotically vanishes such as
$$\frac{m^2}{N}\left|\int_0^T\textbf{I}'(t,N)\big|\d t\right|\to 0\quad\text{as}\quad N\to\infty.$$
By using the same partition of the unity $\chi_{\delta,in}\,,\,\chi_{\delta,out}\,,\,\chi_{\delta,mid}\in C^\infty([0,\infty))$ for some small $\delta>0$, we have the following decomposition,
$$
\begin{aligned}
\textbf{J}'(t,N)=\&\int_{\R^{3(m+1)}} \Big(\chi_{\delta,in}(|v^1-v^{m+1}|)+\chi_{\delta,out}(|v^1-v^{m+1}|)+\chi_{\delta,mid}(|v^1-v^{m+1}|)\Big) \\\&\qquad\qquad\qquad A(v^1-v^{m+1}): \nabla^2_{v^{1}v^{1}}\vphi_{m}(V_m)\left(f_{N,m+1}-f_{m+1}\right)\d V_{m+1}\\
=:\&\textbf{J}'_{in}(t,\delta,N)+\textbf{J}'_{out}(t,\delta,N)+\textbf{J}'_{mid}(t,\delta,N).
\end{aligned}
$$ 
We then have the uniform-in-$N$ estimates parallel to \eqref{ineq In} and \eqref{ineq Out} such as
\begin{equation}\label{ineq In'}
\begin{aligned}
\int_0^T|\textbf{J}'_{in}(t,\delta,N)|\d t\leq\&2\sqrt{\delta}T\left\|\nabla_{v^1v^1}^2\vphi_m\right\|_{L^\infty}\sup_{t\in[0,T]}\bigg(\int_{(\B_{R+\delta})^2}\frac{f_{N,2}}{|v^1-v^{m+1}|^{\frac{3}{2}} }\d v^1\d v^{m+1}\\\&\qquad\qquad+\int_{(\B_{R+\delta})^2}\frac{f_{2}}{|v^1-v^{m+1}|^{\frac{3}{2}} }\d v^1\d v^{m+1} \bigg)\\
\leq \& C(f^0,\vphi_m)T\sqrt{\delta},
\end{aligned}
\end{equation}
and
\begin{equation}\label{ineq Out'}
\begin{aligned}
\int_0^T\left|\textbf{J}'_{out}(t,\delta,N) \right|\d t\leq\&\int_0^T\int_{|v^1-v^{m+1}|\geq \frac{1}{\delta}}  \frac{\left|\nabla^2_{v^{1}v^{1}}\vphi_{m}(V_m)\right|}{|v^1-v^{m+1}|} \left(f_{N,m+1}+f_{m+1}\right)\d V_{m+1}\d t \\\leq&\delta \|\nabla^2_{v^1v^1}\vphi_m\|_{L^\infty(\R^{3m})}\int_0^T\int_{|v^1-v^{m+1}|\geq \frac{1}{\delta}} \left(f_{N,m+1}+f_{m+1}\right)\d V_{m+1}\d t\\\leq&C(\vphi_m)T\delta.
\end{aligned}
\end{equation}
We further let the test function
$$
\psi_{m+1}:=\chi_{\delta,mid}(|v^1-v^{m+1}|)A(v^1-v^{m+1}): \nabla^2_{v^{1}v^{1}}\vphi_{m}(V_m),
$$
which belongs to $W^{2,\infty}(\R^{3(m+1)})$ since $\vphi_m\in W^{4,\infty}(\R^{3m})$ and it is compactly supported by construction. By the same argument as \eqref{ineq Mid}, there exists some time $\tau_k$ in the dense set and some constant $C$ independent of $N$ and $\tau_k$ such that
\begin{equation}\label{ineq Mid'}
\begin{aligned}
\bigg|\int_0^T\&\textbf{J}'_{mid}(t,\delta,N) \d t\bigg|\\\leq\& C(\psi_{m+1})T\delta+T\left\|\psi_{m+1}\right\|_{L^\infty(\R^{3(m+1)})}\int_{supp\psi_{m+1}}\left|f_{N,m+1}(\tau_k)-f_{m+1}(\tau_k)\right|\d V_{m+1},    
\end{aligned}
\end{equation}
where the convergence of the last term is uniform in $\tau_{k}$. Therefore, collecting \eqref{ineq In'}, \eqref{ineq Out'} and \eqref{ineq Mid'}, we arrive to the final estimate for the second term of \eqref{pass to the limit A}
$$
\begin{aligned}
m \left|\int_0^T\textbf{J}'(t,N)\d t\right|=\&m\left|\int_0^T\left(\textbf{J}'_{in}(t,\delta,N)+\textbf{J}'_{out}(t,\delta,N)+\textbf{J}'_{mid}(t,\delta,N)\right)\d t\right| \\
\leq \& C(f^0,\vphi_m)T(\sqrt{\delta}+\delta)+C(\vphi_m,\delta)T\int_{supp\psi_{m+1}}\left|f_{N,m+1}(\tau_k)-f_{m+1}(\tau_k)\right|\d V_{m+1},
\end{aligned}    
$$
which goes to $0$ when $N_\ell\to\infty$ by Proposition \ref{prop strong convergence} and the arbitrariness of $\delta$. Thus, we show that \eqref{pass to the limit A} converges to $0$.

\medskip

We conclude Proposition \ref{prop pass to the limit} and complete the proof of Theorem \ref{thm main}.

\section*{Acknowledgments}
JAC and SG were supported by the Advanced Grant Nonlocal-CPD (Nonlocal PDEs for Complex Particle Dynamics: Phase Transitions, Patterns and Synchronization) of the European Research Council Executive Agency (ERC) under the European Union Horizon 2020 research and innovation programme (grant agreement No. 883363) and  partially supported by the EPSRC EP/V051121/1. JAC was partially supported by the ``Maria de Maeztu'' Excellence Unit IMAG, reference CEX2020-001105-M, funded
by MCIN/AEI/10.13039/501100011033/.

\bibliographystyle{abbrv}
\bibliography{ref}

\end{document}